\def\XXint#1#2#3{{\setbox0=\hbox{$#1{#2#3}{\int}$}
     \vcenter{\hbox{$#2#3$}}\kern-.5\wd0}}
\newtheorem{theorem}{Theorem}[section]
\newtheorem{prop}[theorem]{Proposition}
\newtheorem{cor}[theorem]{Corollary}
\newtheorem{lemma}[theorem]{Lemma}
\theoremstyle{definition}
\newtheorem{definition}[theorem]{Definition}
\newtheorem{remark}[theorem]{Remark}
\newcommand{\strat}{\bullet}
\DeclarePairedDelimiter{\bra}{(}{)}
\DeclarePairedDelimiter{\pra}{[}{]}
\newlength{\leftstackrelawd}
\newlength{\leftstackrelbwd}
\def\leftstackrel#1#2{\settowidth{\leftstackrelawd}%
{${{}^{#1}}$}\settowidth{\leftstackrelbwd}{$#2$}%
\addtolength{\leftstackrelawd}{-\leftstackrelbwd}%
\leavevmode\ifthenelse{\lengthtest{\leftstackrelawd>0pt}}%
{\kern-.5\leftstackrelawd}{}\mathrel{\mathop{#2}\limits^{#1}}}
\newcommand\wh{\widehat}
\newcommand{\R}{\bbR}
\newcommand{\ee}{\mathrm e}
\newcommand{\ba}{\begin{array}} \newcommand{\ea}{\end{array}}
\renewcommand{\div}{\mathop{\mathrm{div}}\nolimits}
\newcommand\oti{\mathord{\otimes}}
\newcommand{\e}{\varepsilon}
\newcommand{\ti}{\mathord{\times}}
\newcommand{\Expectation}{\mathbb E}
\DeclareMathOperator{\range}{range}
\DeclareMathOperator{\rank}{rank}
\newcommand\dr{G}   %
\let\wt\widetilde
\providecommand{\keywords}[1]{\par\medskip\noindent\textbf{Keywords and phrases: } #1}
\providecommand{\MSC}[1]{\par\medskip\noindent\textbf{AMS Mathematics Subject Classification: } #1}
 \def\calE{{\mathcal E}} \def\calF{{\mathcal F}}
 \def\calH{{\mathcal H}} 
  \def\calL{{\mathcal L}}
\def\calV{{\mathcal V}}
\def\sfJ{{\mathsf J}} \def\sfK{{\mathsf K}}
\def\bbA{{\mathbb A}} \def\bbB{{\mathbb B}} 
 \def\bbE{{\mathbb E}} \def\bbF{{\mathbb F}}
\def\bbG{{\mathbb G}}  
\def\bbJ{{\mathbb J}} \def\bbK{{\mathbb K}} 
\def\bbM{{\mathbb M}}  
\def\bbP{{\mathbb P}}  \def\bbR{{\mathbb R}}
\def\bbS{{\mathbb S}}
\def\ssharp{{\sharp^\calH}}
\def\J{\sfJ} %
\def\K{\sfK} %
\begin{document}
\title{A geometric formulation of GENERIC stochastic differential equations}
\author{Mark A. Peletier \and Marcello Seri}

\maketitle

\begin{abstract}
	We propose a coordinate-invariant geometric formulation of the GENERIC stochastic differential equation, unifying reversible Hamiltonian and irreversible dissipative dynamics within a differential-geometric framework. Our construction builds on the classical GENERIC or metriplectic formalism, extending it to manifolds by introducing a degenerate Poisson structure, a degenerate co-metric, and a volume form satisfying a unimodularity condition. The resulting equation preserves a particular Boltzmann-type measure, ensures almost-sure conservation of energy, and reduces to the deterministic GENERIC/metriplectic formulation in the zero-noise limit. This geometrization separates system-specific quantities from the ambient space, clarifies the roles of the underlying structures, and provides a foundation for analytic and numerical methods, as well as future extensions to quantum and coarse-grained systems.
\end{abstract}

\keywords{Hamiltonian systems, gradient systems, GENERIC systems, metriplectic systems, coarse-graining, energy, entropy, Poisson operator, Onsager operator}
\MSC{37J55, 37K05, 60H10, 60H30, 53D17, 53C17, 80A05, 82C31, 82C35}

\section{Introduction}

The evolution of real-world dynamical systems exhibits an intricate combination of conservative and dissipative dynamics.
The \emph{General Equation for Non-Equilibrium Reverisble Irreversible Coupling} (GENERIC) framework, as suggested by its acronym, is a formalism that combines these conservative (reversible) and dissipative (irreversible) evolutions of the system in a thermodynamically consistent way.

The framework has its roots in 1984, with the independent definitions of dissipative brackets by Grmela~\cite{Grmela84,Grmela85}, Kaufman~\cite{Kaufman84} and Morrison~\cite{Morrison84,Morrison86}. In both~\cite{Grmela84} and~\cite{Morrison86} independently the authors combine these dissipative brackets with Poisson brackets into a structure that is now known as \emph{metriplectic} or \emph{GENERIC}, a geometric framework that has since been used extensively in plasma physics and fluid dynamics~\cite{Guha2007,Birtea2007}.
The acronym GENERIC itself was introduced a decade later by Grmela and \"Ottinger~\cite{GrmelaOttinger97, OttingerGrmela97}.
A comprehensive review of the GENERIC framework can be found in~\cite{Oettinger05}.

While there are slight differences between the metriplectic and GENERIC frameworks (see Remark~\ref{rmk:non-quadratic-dissipation}) the GENERIC framework has been extended to include stochastic effects, leading to the so-called \emph{GENERIC with fluctuations}~\cite{GrmelaOttinger97,Oettinger05}. However, this stochastic formulation is not fully coordinate-invariant: the condition~\eqref{eq:divergenceJ} below that guarantees the stationarity of a Boltzmann-type measure is not invariant under changes of coordinates, as we discuss in Section~\ref{sec:generic-sde}.

\medskip
The goal of this paper is to propose a geometric formulation of the GENERIC stochastic differential equation (SDE) that is coordinate-invariant, and that reduces to a geometric formulation of the deterministic metriplectic equation in the limit of vanishing noise. This formulation is given in  Definitions~\ref{def:mfGSDE-generator} and~\ref{def:mfGSDE-SDE} below.

\medskip
There are many reasons for pursuing such a goal.
For instance, a geometric formulation can allow to separate the description of the ambient space, in which the dynamics takes place, from the specifics of the system itself, clarifying the fundamental roles of each of the quantities at play in a way that is both mathematically clean and physically meaningful.
Furthermore, such a formulation brings with itself a wealth of new tools from differential geometry and mechanics that can be used to provide new analytic and numerical tools to study qualitative and quantitative properties of the dynamics.
And finally, a geometrization can reduce the gap to the study of the quantum counterparts of these systems, a topic that we plan to explore in the close future.

It should not be a surprise, then, that much effort has gone into geometric approaches to describe dissipative systems.
For instance, contact mechanics was connected to dissipative mechanics and GENERIC itself very early on~\cite{Cantrijn1982,Mrugala1991,Mrugala1993} but only recently has seen a surge of interest both as a standalone theory \cite{Grmela2014,Bravetti2017,deLeon2017,Ciaglia2018,Gaset2020, Colombo2025} and as a possible framework to geometrize the GENERIC equation~\cite{EsenGrmelaPavelka22I,EsenGrmelaPavelka22II}.
However, these are by no means the only possible geometries, as exemplified by the recent works geometrizing dissipation with b-symplectic geometry \cite{Coquinot2023} or Jacobi-Haantjes manifolds \cite{azuaje2025}.
An important step in this geometrization is constructing or leveraging the appropriate mathematical tools to deal with the inherent singularities that arise in the description of dissipative systems, as we will see in Section~\ref{sec:geometrization}.

\medskip
The paper is structured as follows. In Section~\ref{sec:generic} we recall the definition of the GENERIC equation in the Euclidean setting and its main properties, and define our main notation and terminology. In Section~\ref{sec:generic-sde} we introduce the stochastic version of the GENERIC equation in the Euclidean setting, and we discuss in particular the conditions that guarantee the existence of a stationary measure. In Section~\ref{s:sdes-on-mfs} we describe the basic theory of stochastic differential equations on manifolds, and in Section~\ref{sec:geometrization} we present our geometric formulation of the GENERIC SDE, discussing its properties and its relation to the Euclidean formulation and the deterministic metriplectic equation.
In Section~\ref{s:vanishing-noise} we discuss the `zero-noise limit' and show that the geometric GENERIC SDE reduces to the deterministic GENERIC equation in that limit.
We close the paper in Section~\ref{sec:discussion} with some concluding remarks.

\section{GENERIC}\label{sec:generic}

We will start our discussion in the Euclidean setting in order to connect to the usual formulation of GENERIC in the original work~\cite{GrmelaOttinger97,OttingerGrmela97,Oettinger05}. We then generalize the formalism leading to the geometrization of Section~\ref{sec:geometrization}.

Within the GENERIC framework, the evolution of the state $x(t) \in \bbR^d$ of a system takes the form
\begin{equation}\label{eq:generic}
	\dot x = J(x)\, DE(x) + K(x)\, DS(x),
\end{equation}
where $D$ is the total derivative of the real functions $E, S : \bbR^d \to \bbR$ and $J, K: \bbR^d \to \mathrm{Mat}(d,\bbR)$ are matrix-valued functions with the following properties:
\begin{enumerate}
	\item\label{cond:generic-metric} For all $x\in\bbR^d$, $K(x)$ is a $d \times d$ symmetric  positive semidefinite matrix, and plays the role of a (degenerate) inner product.
	\item\label{cond:generic-poission} For all $x\in\bbR^d$, $J(x)$ is a $d\times d$ antisymmetric matrix satisfying the Jacobi identity in the following sense. Let $F,G,H \in C^\infty(\bbR^d)$, if $\{F,G\}_J := \langle DF, J\, DG \rangle$, then
	      \[
		      \{F,\{G,H\}_J\}_J + \{G,\{H,F\}_J\}_J + \{H,\{F,G\}_J\}_J = 0.
	      \]
	      This plays the role of a (degenerate) Poisson structure.
	\item\label{cond:generic-degeneracy} The following \emph{non-interaction conditions} are satisfied: for all $x\in\bbR^d$,
	      \begin{equation}
		      J(x)\, DS(x)  = 0 \qquad\text{and}\qquad
		      K(x)\, DE(x)  = 0.
		      \label{eq:NIC-Rd}
	      \end{equation}
\end{enumerate}
The physical origins of the GENERIC approach are reflected in the names commonly associated to some of the functions just introduced, where $E$ is called the \emph{total energy} of the system and $S$ its \emph{entropy}.

If we consider $K$ to be the matrix form of a degenerate inner product, $K(x)\,D$  represents the associated gradient and $\dot x = K(x)\, DS(x)$ the corresponding gradient flow.
In a similar vein, $J$ is a degenerate Poisson matrix, giving rise to the Hamiltonian vector field $J(x)\, DE(x)$. That is, $\dot x = J(x)\, DE(x)$ is Hamilton's equation for the Hamiltonian function $E$ with respect to the Poisson structure induced by $J$.
This already shows that the theory generalizes both classical Hamiltonian systems and gradient flows: these correspond respectively to the special cases $K(x)\, DS(x) = 0$ and $J(x)\, DE(x) = 0$.

\begin{remark}[Notational convention for derivatives in $\bbR^d$]
	When working in $\bbR^d$, we do not distinguish between vectors and co-vectors, silently identified via the standard inner product, and we denote the total derivative and the gradient by the same symbol~$D$.
	Given $x = (x_1, \ldots, x_d) \in \bbR^d$, we denote by $\partial_{x_i}$ the partial derivative with respect to the $i$-th coordinate.
\end{remark}

\begin{remark}[Historical remarks and terminology]
	In geometric mechanics and Poisson geometry, phase-space functions that are conserved by the dynamics associated to any Hamiltonian (itself a phase-space function) are called \emph{Casimirs} of the Poisson structure~\cite{Marsden_1999}.
	In these terms, the condition on $J$ at point \ref{cond:generic-degeneracy} above says that entropy $S$ is a Casimir of the Poisson structure $J$.
	In analogy to this, Kaufman~\cite{Kaufman84} introduced a dissipative bracket associated to the degenerate inner product, defining \emph{dissipative invariants} as the phase-space functions whose dissipative brackets with any entropy (itself again a phase-space function) are vanishing.
	In this sense, the condition on $K$ at point \ref{cond:generic-degeneracy} above is rephrased saying that energy $E$ is a dissipative invariant of the dissipative structure $K$, also called a ``dissipation Casimir''~\cite{EsenGrmelaPavelka22I}.
\end{remark}

At a basic level, the thermodynamic consistency of GENERIC can be recognized in the behaviour of the energy and entropy functions $E$ and $S$.
Indeed, let $x(t)$ be a solution of \eqref{eq:generic}. Then, the degeneracy conditions and the properties of $J$ and $K$ imply
\begin{align*}
	\frac{d}{dt} E(x(t)) & = \langle DE(x(t)), \dot{x}(t)\rangle = \langle DE, J\, DE \rangle + \langle DE, K\, DS \rangle = 0,     \\
	\frac{d}{dt} S(x(t)) & = \langle DS(x(t)), \dot{x}(t) \rangle = \langle DS, J\, DE \rangle + \langle DS, K\, DS \rangle \geq 0.
\end{align*}
That is, energy is conserved along the Hamiltonian evolution, while entropy is non-decreasing along the gradient flow evolution, in accordance with the first and second principles of thermodynamics.

\begin{remark}[Finite dimensions]
	The only reason we restrict ourselves to the finite-dimensional Euclidean setting is for simplicity of exposition. The extension to systems defined on Hilbert spaces is relatively straightforward, and the properties discussed above remain valid also in such a setting \cite{MielkePeletierZimmer25}.
\end{remark}

\begin{remark}[Non-quadratic dissipation]\label{rmk:non-quadratic-dissipation}
	We only presented the GENERIC formulation in a large but specific sub-case.
	An important generalization consists in replacing the linear operator $\xi \mapsto K(x)\xi$ by a nonlinear operator $\xi \mapsto \overline K(x,\xi)$. The natural generalization of the symmetry and non-negativity conditions on $K$ are that for each $x$, $\overline K(x,\cdot)$ is the derivative of a convex lower semicontinuous function. Such generalizations have been proposed on the basis of the structure of the Boltzmann collision term~\cite{Grmela93} and chemical reaction kinetics~\cite{Grmela10}, and have been shown to arise as large-deviation rate functions~\cite{MielkePeletierRenger14,KraaijLazarescuMaesPeletier20}. In this paper we restrict ourselves to the case of linear $K$, which lends itself to a more straightforward geometric interpretation, and we postpone the discussion of the more general case to future work.
\end{remark}

\section{GENERIC SDEs}\label{sec:generic-sde}

To clarify some of the choices that enter in the geometric formulation of GENERIC that we present below, we first discuss  `GENERIC with fluctuations', which is the stochastic version of GENERIC presented in~\cite{GrmelaOttinger97,Oettinger05}.
This is a stochastic differential equation (SDE) for a random process $X_t$ taking values in $\bbR^d$ of the form
\begin{equation}\label{eq:generic-sde}
	dX_t = \bra[\Big]{J(X_t)\, DE(X_t) + K(X_t)\, DS(X_t) +  \div K(X_t)}\, dt + \Sigma(X_t) \, dB_t,
\end{equation}
where $B_t$ is a standard Brownian motion and where we are assuming that~$K$ and the mobility~$\Sigma$ are related by the following \emph{fluctuation-dissipation relation}:
\begin{equation}\label{eq:fluctuation-dissipation}
	\Sigma \Sigma^*(x) = 2  K(x) \quad \text{for all } x \in \bbR^d.
\end{equation}
Historically this SDE has been introduced by~\cite{GrmelaOttinger97} and is found in the literature also under the name of ``GENERIC with fluctuations''~\cite{Oettinger05}. A well-known argument based on It\^o's Lemma~\cite[Sec.~3.4]{Pavliotis14} shows that the process has as generator
\begin{equation}
	\label{eqdef:generator-sde-Euclidean}
	Lf = Df^*JDE + Df^*KDS +\div (KD f).
\end{equation}

Because of the fluctuation-dissipation relation~\eqref{eq:fluctuation-dissipation} and the non-interaction conditions~\eqref{eq:NIC-Rd}, the SDE~\eqref{eq:generic-sde} inherits the conservation of energy~\cite[Lemma 4.5, part 1]{MielkePeletierZimmer25}:
\begin{lemma}
	Assume that $E,S,J,$ and $K$ satisfy conditions~\ref{cond:generic-metric}--\ref{cond:generic-degeneracy} of the deterministic GENERIC equation in Section~\ref{sec:generic} and that \eqref{eq:fluctuation-dissipation} holds. Then the evolution deterministically preserves the energy $E$, that is, for any $X_0\in\bbR^d$,
	\[
		E(X_t) = E(X_0) \quad\mbox{almost surely, for all } t \geq 0.
	\]
\end{lemma}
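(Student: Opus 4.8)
The plan is to apply Itô's Lemma to the scalar process $E(X_t)$ and to show that \emph{both} its drift and its martingale part vanish identically. Using that $L$ in~\eqref{eqdef:generator-sde-Euclidean} is the generator of the process, I would first write
\begin{equation*}
	dE(X_t) = (LE)(X_t)\,dt + \langle DE(X_t), \Sigma(X_t)\,dB_t\rangle ,
\end{equation*}
so that the claim reduces to the two pointwise identities $LE \equiv 0$ (no drift) and $\Sigma^* DE \equiv 0$ (no diffusion). The first alone would only give that $E(X_t)$ is a local martingale, hence constant in expectation; the second is what upgrades this to the almost-sure, pathwise statement.

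For the drift, I would evaluate $LE = DE^* J\,DE + DE^* K\,DS + \div(K\,DE)$ term by term. The first term vanishes pointwise because $J$ is antisymmetric, so $DE^* J\,DE = 0$. For the second term I would use the symmetry of $K$ together with the non-interaction condition $K\,DE = 0$ from~\eqref{eq:NIC-Rd}, which gives $DE^* K = (K\,DE)^* = 0$ and hence $DE^* K\,DS = 0$. The third term is the most instructive: since $K\,DE = 0$ as a vector field, its divergence is automatically zero, so $\div(K\,DE) = 0$. Expanding $\div(K\,DE) = \langle DE, \div K\rangle + \sum_{i,j} K_{ij}\,\partial_{x_i}\partial_{x_j} E$ reveals that this is precisely the cancellation, forced by differentiating the identity $K\,DE = 0$, between the Itô correction arising from the second-order term and the drift correction $\div K$ appearing in~\eqref{eq:generic-sde}; it is reassuring that the generator packages this cancellation automatically. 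Hence $LE = 0$.

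For the martingale part, I would observe that the fluctuation--dissipation relation~\eqref{eq:fluctuation-dissipation} and the non-interaction condition combine to kill the noise seen by $E$. Indeed, from $K\,DE = 0$ we get $0 = DE^* K\,DE = \tfrac12 DE^* \Sigma\Sigma^* DE = \tfrac12\,|\Sigma^* DE|^2$, so that $\Sigma^* DE = 0$ and therefore $\langle DE, \Sigma\,dB_t\rangle = 0$. Combining this with $LE = 0$ yields $dE(X_t) = 0$, whence $E(X_t) = E(X_0)$ almost surely for all $t \geq 0$.

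I do not anticipate a serious obstacle, but the one subtlety worth flagging is that deterministic (pathwise) conservation is genuinely stronger than conservation in expectation, so one must verify the vanishing of the diffusion coefficient $\Sigma^* DE$ and not merely of the drift $LE$. This is exactly the step that uses the fluctuation--dissipation relation~\eqref{eq:fluctuation-dissipation}, rather than the Jacobi identity or the full strength of the semidefiniteness of $K$; technically it also requires enough regularity and integrability (e.g.\ local boundedness of $\Sigma^* DE$ along trajectories) to justify Itô's Lemma and to identify the stochastic integral as a genuine martingale, which I would assume throughout.
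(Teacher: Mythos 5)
Your argument is correct, and it rests on exactly the structural facts the paper relies on: antisymmetry of $J$, the non-interaction condition $K\,DE=0$ from~\eqref{eq:NIC-Rd}, and the fluctuation--dissipation relation~\eqref{eq:fluctuation-dissipation}. Note, however, that the paper does not prove this Euclidean lemma itself (it is imported from~\cite{MielkePeletierZimmer25}); the closest in-text proof is that of the manifold analogue in Section~\ref{ss:props-of-sde-mf}, which takes a slightly different route. There the SDE is written in Stratonovich form, driven by vector fields $A_i$ with $\sum_i (A_if)(A_ih)=\K(df,dh)$, and one checks that every driving field annihilates $E$ (each $A_iE=0$ because $\K(dE,\cdot)=0$, and $B_0E=0$); the Stratonovich chain rule~\eqref{eqdef:soln-of-SDE-Ito-formula} then yields $E(X_t)=E(X_0)$ pathwise in a single step, with no separate bookkeeping of drift versus martingale part and no It\^o correction to cancel. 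Your It\^o-based version must do that bookkeeping explicitly, and you do it correctly: the identity $\abs{\Sigma^*DE}^2 = 2\,DE^*K\,DE = 0$ is the right way to kill the noise term (and is equivalent to the statement that each column of $\Sigma$, i.e.\ each $A_i$-direction, annihilates $E$), while your remark that $\div(K\,DE)=\div(0)=0$ is precisely the automatic cancellation between the It\^o correction and the $\div K$ drift. The two proofs are thus two presentations of the same mechanism; the Stratonovich one is shorter because the pathwise conclusion is immediate once the coefficients vanish, whereas yours makes visible exactly where the fluctuation--dissipation relation enters and why conservation in expectation would not suffice. Your closing caveat about regularity needed to apply It\^o's formula is appropriate and matches the level of rigor of the source.
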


A central step in our approach is to require that the measure $\mu(dx)=e^{S(x)}\, dx$ is stationary under the SDE~\eqref{eq:generic-sde}. This property is implicitly present in various places in the literature, such as~\cite[(54)]{GrmelaOttinger97}, \cite[(10)]{Ottinger98}, and~\cite[(5.44c)]{PavelkaKlikaGrmela18}, and we discuss it in more detail in Section~\ref{ss:props-of-sde-mf} below.

To achieve this stationarity, an additional constraint on the GENERIC components is required.
In~\cite{MielkePeletierZimmer25}, the authors propose the additional condition
\begin{equation}\label{eq:divergenceJ}
	\div J = 0,
\end{equation}
where we take the divergence of a matrix field to be the column vector of the divergences of each of the rows.
This assumption leads to the following result~\cite[Lemma 4.5, part 2]{MielkePeletierZimmer25}.
\begin{lemma}\label{lem:stationary-measure-GENERIC}
	Under the conditions of the previous lemma and~\eqref{eq:divergenceJ}, the measure $\mu = e^{S(x)}\, dx$ is stationary for the SDE~\eqref{eq:generic-sde}.
\end{lemma}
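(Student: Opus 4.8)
The plan is to verify stationarity in its weak (generator) form: to show that $\mu = e^{S}\,dx$ is invariant it suffices to prove
\[
\int_{\bbR^d} (Lf)\,e^{S}\,dx = 0 \qquad \text{for every } f \in C_c^\infty(\bbR^d),
\]
where $L$ is the generator~\eqref{eqdef:generator-sde-Euclidean}. The passage from this weak identity to genuine invariance of the semigroup is standard Fokker--Planck theory and I would import it rather than reprove it. I would then test each of the three terms of $L$ against the density $\rho := e^{S}$, using repeatedly that $D\rho = \rho\,DS$ and integrating by parts, with the compact support of $f$ discarding all boundary terms.

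First the dissipative block, i.e.\ the second and third terms of $L$. Integration by parts moves the divergence in the third term onto $\rho$, and symmetry of $K$ (condition~\ref{cond:generic-metric}) gives
\[
\int \div(K Df)\,\rho\,dx = -\int \langle K Df, D\rho\rangle\,dx = -\int \rho\,\langle Df, K DS\rangle\,dx,
\]
which is exactly minus the integral of the second term $\langle Df, K DS\rangle$ against $\rho$. Hence these two cancel. This is the familiar statement that an Onsager/gradient flow is reversible with respect to its Gibbs measure, and it requires nothing beyond the symmetry of $K$.

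The Hamiltonian term is where the new hypothesis enters. Setting $V := J\,DE$, one more integration by parts yields $\int \rho\,\langle Df, V\rangle\,dx = -\int f\,\div(\rho V)\,dx$, so it suffices to show $\div(\rho V)=0$. Expanding, $\div(\rho V) = \rho\,\div V + \rho\,\langle DS, V\rangle$. The term $\langle DS, J\,DE\rangle$ vanishes by antisymmetry of $J$ combined with the non-interaction condition $J\,DS=0$ of~\eqref{eq:NIC-Rd}. For the remaining piece, the product rule gives
\[
\div(J\,DE) = \sum_{i,j}(\partial_i J_{ij})\,\partial_j E + \sum_{i,j} J_{ij}\,\partial_i\partial_j E,
\]
where the second sum vanishes because the antisymmetric $J$ is contracted against the symmetric Hessian of $E$, while the first sum equals $-\langle \div J, DE\rangle$ once antisymmetry and the row-wise divergence convention are used. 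By the hypothesis~\eqref{eq:divergenceJ} this is zero, so $\div(\rho V)=0$ and the Hamiltonian term integrates to zero.

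Combining the three contributions gives $\int (Lf)\,\rho\,dx = 0$ and hence stationarity. The single load-bearing step is the last identity $\div(J\,DE) = -\langle \div J, DE\rangle$: it isolates precisely why condition~\eqref{eq:divergenceJ} is imposed, namely to render the Hamiltonian vector field $J\,DE$ divergence-free so that, together with $J\,DS=0$, the reversible dynamics preserves $e^{S}\,dx$. Everything else is routine integration by parts, and I would expect no genuine analytic obstacle beyond quoting the standard equivalence between the weak identity and semigroup invariance.
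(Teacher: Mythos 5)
Your proof is correct and follows essentially the same route as the paper, which establishes this result as the Euclidean special case of the more general Lemma~\ref{l:invariance-ebetaS}: test the generator against the density, cancel the drift term $\langle Df, KDS\rangle$ against the diffusion term $\div(KDf)$ by integration by parts and symmetry of $K$, and kill the Hamiltonian term using antisymmetry of $J$, the non-interaction condition $J\,DS=0$, and the divergence condition~\eqref{eq:divergenceJ}. Your identity $\div(J\,DE)=-\langle \div J, DE\rangle$ is precisely the Euclidean incarnation of the unimodularity statement (Proposition~\ref{prop:unimodularity}) that the paper's manifold proof invokes; the only content the paper adds is the stronger conclusion that every measure $h(E)e^{S}\nu$ is invariant, which requires the extra observations $K(df,dE)=0$ and $\J(dE,dE)=0$ to dispose of the $h'(E)$ terms.
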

\noindent
We prove a more general version of this lemma below as Lemma~\ref{l:invariance-ebetaS}.

\begin{remark}
	An alternative proposal was put forward by \"Ottinger~\cite[(6.163)]{Oettinger05},  in which~\eqref{eq:divergenceJ} is replaced by a relation between $J$ and $S$. In this paper we do not further consider this proposal, both because $S$ is typically assumed to be independent of temperature and because  it is not clear how to extend this to a geometric formulation of GENERIC in a coordinate-free manner.
\end{remark}

A problem with the condition~\eqref{eq:divergenceJ} is that in current form it is not invariant under changes of coordinates. In~\cite[Remark 4.7]{MielkePeletierZimmer25} the authors recognize this and propose a generalization involving the existence of a density $m$ on $\R^d$ such that $\div(m J) = 0$.
This interpretation is already more natural as it leads to a coordinate-invariant description that can be directly connected to uni-modularity of the Poisson structure $J$.
In some sense, this already hints at the possibility of a geometrization of the condition~\eqref{eq:divergenceJ}. However, also in this case, the connection to a geometric formulation of GENERIC is not immediate, since the coordinate invariance comes from a non-trivial interplay between the condition above, an It\^o correction term, and the gradient term $K\,DS$.

\section{Stochastic differential equations on manifolds}
\label{s:sdes-on-mfs}

As preparation for the definition of the GENERIC SDE in the manifold context we first describe the basic theory of SDEs on manifolds. In this paper we follow Thalmaier's presentation~\cite{Thalmaier16}, and we assume that the reader is familiar with the theory of stochastic processes in $\R^d$, in particular with the concepts of a filtered probability space, semimartingales, stopping times, adapted processes,  conditional expectations, and It\^o and Stratonovich stochastic differential equations in $\R^d$.  Good introductions to these can be found in many places, e.g.~\cite{IkedaWatanabe89,RevuzYor13, Protter13, Evans_2013}.

Similarly, we also assume that the reader is familiar with the  basic concepts of differential geometry of smooth manifolds, such as tensor calculus, metric and Poisson structures, Hamiltonian and gradient flows. Good introductions to these can also be found in many places, e.g.~\cite{Abraham_1988,Marsden_1999,Lee_2012}.

\subsection{Definition of an SDE through Stratonovich integration}
Fix a {filtered probability space $\bbF:=(\Omega,\calF,(\calF_t)_{t\geq0},\bbP)$ satisfying the usual conditions}.
\begin{definition}[Manifold SDE; {\cite[Sec.~2.3]{Thalmaier16}}]
	\label{def:SDE-mf-Thalmaier}
	Fix a differential manifold $M$ and a real finite-dimensional space $E$.
	A stochastic differential equation on $M$ is characterized by a pair $(\dr,Z)$, where
	\begin{enumerate}
		\item $Z$ is a {semimartingale} on $\bbF$ with values in $E$, and
		\item $\dr: M\ti E \to TM$ is a smooth homomorphism of vector bundles, i.e.\ a smooth map such that the following diagram commutes:
		      \begin{center}
			      \begin{tikzcd}
				      M \ti E \arrow[r, "\dr"] \arrow[d, "\mathrm{pr}_1"] & TM \arrow[d, "\pi"] \\
				      M \arrow[r, "\mathrm{id}"] & M
			      \end{tikzcd}
			      \quad.
		      \end{center}
	\end{enumerate}
	We write $\dr(x): E \to T_x M$ for the map $e\mapsto \dr(x) e := \dr(x,e)$.
	With this definition, $\dr(\cdot)e \in \Gamma(TM)$ for all $e\in E$.
\end{definition}

The pair $(\dr,Z)$ generates the {Stratonovich} stochastic differential equation on $M$,
\begin{equation}
	\label{eqdef-general:SDE-on-M}
	dX_t = \dr(X_t) \strat dZ_t,
\end{equation}
with the following solution concept.

\begin{definition}[Solution of the SDE; {\cite[Def.~2.15]{Thalmaier16}}]
	\label{def:solution-of-SDE}
	Let $(\dr,Z)$ be as in Definition~\ref{def:SDE-mf-Thalmaier}.
	Let $(X_t)_{0\leq t< \zeta}$ be an adapted process  with continuous sample paths, defined up to a {stopping time} $\zeta\geq0$. We say that $X$ is a solution of the SDE~\eqref{eqdef-general:SDE-on-M} if the following two properties hold for all $f\in C^\infty(M)$:
	\begin{itemize}
		\item $f\circ X$ is a semimartingale;
		\item For any {stopping time} $0\leq \tau < \zeta$,
		      \begin{equation}
			      \label{eqdef:soln-of-SDE-Ito-formula}
			      f(X_\tau) = f(X_0) + \int_0^\tau \pra[\big]{(df)_{X_s}\dr(X_s)}\strat dZ_s.
		      \end{equation}
	\end{itemize}
\end{definition}
\noindent
Note that the classical It\^o-Stratonovich formula has been promoted from consequence to definition, in the form of~\eqref{eqdef:soln-of-SDE-Ito-formula}.
Recalling that $X_s$ takes values on the manifold $M$, the argument $(df)_{X_s}\dr(X_s)$ of the integral is a linear map $E\to T_{X_s}M \to \R$, and the integral is then a classical Stratonovich integral in $\R$.

\begin{theorem}[Existence and uniqueness of solutions of the SDE; {\cite[Th.~2.20]{Thalmaier16}}]
	\label{t:ex-in-SDE}
	Let $M$, $\dr $, and $Z$ be as above, and fix an $\mathscr{F}_0$-measurable $x_0: \Omega \rightarrow M$.
	Then there exists a unique maximal solution $(X_t)_{0\leq t< \zeta}$  of the SDE
	$$
		d X=\dr (X) \strat d Z
	$$
	with initial condition $X_0=x_0$. Here the stopping time $\zeta$ is almost surely strictly positive. Uniqueness holds in the sense that if $(Y_t)_{0\leq t<\xi}$ is another solution with $Y_0=x_0$, then almost surely $\xi \leq \zeta$ and $(X_t)_{0\leq t<\xi} = (Y_t)_{0\leq t<\xi}$.
\end{theorem}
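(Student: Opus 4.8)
The plan is to reduce the manifold problem to a sequence of classical Stratonovich SDEs in $\bbR^n$ ($n=\dim M$) read off in local charts, and then to glue the resulting local solutions into a single maximal process on $M$. The reduction is made possible by the fact that the solution concept of Definition~\ref{def:solution-of-SDE} is intrinsic: it is phrased through the It\^o--Stratonovich identity~\eqref{eqdef:soln-of-SDE-Ito-formula} tested against all $f\in C^\infty(M)$, and because Stratonovich integration obeys the ordinary chain rule, a process expressed in a chart $\varphi\colon U\to\bbR^n$ solves the intrinsic equation on $U$ if and only if its coordinate representation $\varphi(X)$ solves the Euclidean Stratonovich SDE $d\varphi(X)=a(\varphi(X))\strat dZ$, where $a$ is the smooth matrix-valued coefficient obtained by writing $\dr$ in the chart. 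This equivalence is exactly what frees the construction from any dependence on the chosen coordinates.

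First I would establish local existence and uniqueness. In a fixed chart the coefficient $a$ is smooth, hence locally Lipschitz, and the driver $Z$ is a general $E$-valued semimartingale; classical SDE theory for equations driven by semimartingales with locally Lipschitz coefficients then yields a unique local solution up to the exit time from the chart. To apply this theory one first rewrites the Stratonovich equation in It\^o form, the Stratonovich correction being expressible through the derivatives of $a$ and the quadratic-covariation structure of $Z$; smoothness of $\dr$ guarantees that this correction is again locally Lipschitz. The $\mathscr{F}_0$-measurable, $M$-valued initial condition $x_0$ is accommodated by choosing a countable atlas $\{(U_k,\varphi_k)\}$, partitioning $\Omega$ into the $\mathscr{F}_0$-measurable events $\{x_0\in U_k\}$, and solving on each piece with the ($\mathscr{F}_0$-measurable) initial datum $\varphi_k(x_0)$.

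Next I would patch. Starting from the chart containing $x_0$, the local solution runs until the stopping time $\tau_1$ at which $X$ exits the chart; at $X_{\tau_1}$ one selects a new chart, restarts the Euclidean SDE with initial datum $X_{\tau_1}$, and continues. Iterating produces an increasing sequence of stopping times $\tau_1\le\tau_2\le\cdots$ and a consistent process defined up to $\zeta:=\lim_k\tau_k$. Consistency of the pieces on overlaps is guaranteed by the chart-independence established above, so the glued object is a well-defined adapted, continuous process solving~\eqref{eqdef:soln-of-SDE-Ito-formula} on $[0,\zeta)$; that $\zeta>0$ almost surely is immediate since the first local solution already lives on a nondegenerate random interval. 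Maximality is built into the construction, $\zeta$ being the explosion time.

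Uniqueness propagates from the local to the global level by the same mechanism: two solutions with the same initial condition agree, by Euclidean pathwise uniqueness, on the first chart up to the exit time, hence their exit times and exit points coincide, and an induction over the patching steps forces them to coincide on the common lifetime. I expect the main obstacle to be the patching argument itself --- namely verifying that the $\tau_k$ are genuine stopping times, that the spliced process is adapted and measurable (especially in the presence of the random, chart-dependent initialization), and that the chain-rule equivalence is applied correctly across chart overlaps so that no spurious drift is introduced at the gluing times. The analytic inputs (local existence, the Stratonovich-to-It\^o conversion, pathwise uniqueness) are standard; the care lies in the bookkeeping of the localization.
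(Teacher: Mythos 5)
The paper offers no proof of this statement: it is quoted directly as Theorem~2.20 of the cited reference \cite{Thalmaier16}, so there is no internal argument to compare against. Your chart-localization strategy --- use the chain-rule invariance of Stratonovich integration to show that solving the intrinsic equation in a chart is equivalent to solving a locally Lipschitz Euclidean semimartingale SDE, then patch local solutions at chart exit times and propagate pathwise uniqueness inductively --- is the standard proof of this result (the main textbook alternative being a Whitney-embedding argument), and it is essentially sound; the only points needing explicit care beyond the caveats you already raise are that the restarted equations are semimartingale SDEs initialized at stopping times (covered by the standard theory) and that each new chart must contain the restart point in its interior, so that the successive exit times are strictly increasing wherever the process has not yet reached its explosion time.
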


\medskip
The right-hand side in~\eqref{eqdef-general:SDE-on-M} allows for both deterministic `drift' terms and stochastic `noise' terms. We will use the following special case in the sequel: take  $E = \R^{r+1}$ and assume that the homomorphism $\dr: M\ti E\to TM$ is given. We take $Z = (t,W^1,\dots , W^r)$ where $(W^1,\dots,W^r)$ is a vector of independent scalar standard Brownian motions. Defining $\dr_i$ in terms of the elementary unit vectors $e_i$  by
\[
	\dr_i: M\to TM, \qquad
	\dr_i(x) := \dr (x)e_i, \qquad i=0,1,\dots,r,
\]
we can write the equation~\eqref{eqdef-general:SDE-on-M} as
\begin{equation}
	\label{eqdef:SDE-on-mf-driven-by-BM}
	dX_t = \dr_0(X_t) \, dt + \sum_{i=1}^r \dr_i(X_t)\strat dW^i_t.
\end{equation}
Here one recognizes the first term as a `drift' term. For this choice of $Z$, by localizing to a chart one infers that solutions of~\eqref{eqdef:SDE-on-mf-driven-by-BM} are almost surely continuous, similarly to solutions of SDEs in $\R^d$.

\subsection{Generator characterization}

Solutions of the SDE can also be characterized in terms of their infinitesimal generator:
\begin{lemma}[Ex.~2.17 and Cor.~2.18 of~\cite{Thalmaier16}]
	\label{l:SDE-is-L-diffusion}
	If the process defined by the SDE~\eqref{eqdef:SDE-on-mf-driven-by-BM} has infinite lifetime, i.e.\ $\zeta = +\infty$ almost surely, then the process is an \emph{$L$-diffusion}, where the operator $L$ is given by
	\begin{equation}\label{eq:generator-L}
		Lf = \dr _0f + \frac12 \sum_{i=1}^r \dr _i(\dr _if)
		\qquad \text{for any }f\in C_c^\infty(M).
	\end{equation}
\end{lemma}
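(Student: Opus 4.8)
The plan is to work directly from the solution concept of Definition~\ref{def:solution-of-SDE}, specialized to the driver $Z=(t,W^1,\dots,W^r)$, and to convert the Stratonovich integrals appearing there into It\^o integrals; the extra drift produced by this conversion will be exactly the second-order part of $L$. First I would fix $f\in C_c^\infty(M)$ and apply~\eqref{eqdef:soln-of-SDE-Ito-formula}. Since $(df)_x\dr_i(x)=(\dr_i f)(x)$ is the directional derivative of $f$ along the vector field $\dr_i$, the defining identity becomes
\[
	f(X_\tau)=f(X_0)+\int_0^\tau(\dr_0 f)(X_s)\,ds+\sum_{i=1}^r\int_0^\tau(\dr_i f)(X_s)\strat dW^i_s,
\]
where each term on the right is now a classical Stratonovich integral in $\R$ along the real semimartingale $s\mapsto(\dr_i f)(X_s)$.

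Next I would rewrite each Stratonovich integral as an It\^o integral plus the correction $\tfrac12[(\dr_i f)(X),W^i]_\tau$, where $[\,\cdot\,,\cdot\,]$ denotes quadratic covariation. To evaluate this covariation I need the martingale part of the semimartingale $(\dr_i f)(X)$. The key observation is that $\dr_i f\in C_c^\infty(M)$, so I may apply the solution property~\eqref{eqdef:soln-of-SDE-Ito-formula} a second time, now to the function $\dr_i f$, to read off its semimartingale decomposition; its martingale part is $\sum_{j=1}^r\int(\dr_j(\dr_i f))(X_u)\,dW^j_u$. Since $[W^j,W^i]_u=\delta_{ij}\,u$, only the $j=i$ term survives and
\[
	[(\dr_i f)(X),W^i]_\tau=\int_0^\tau(\dr_i(\dr_i f))(X_u)\,du,
\]
so the $i$-th Stratonovich integral equals the It\^o integral $\int_0^\tau(\dr_i f)(X_s)\,dW^i_s$ plus the correction $\tfrac12\int_0^\tau(\dr_i(\dr_i f))(X_s)\,ds$.

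Substituting back and collecting the finite-variation terms gives
\[
	f(X_\tau)=f(X_0)+\int_0^\tau\Bigl[(\dr_0 f)+\tfrac12\sum_{i=1}^r\dr_i(\dr_i f)\Bigr](X_s)\,ds+\sum_{i=1}^r\int_0^\tau(\dr_i f)(X_s)\,dW^i_s,
\]
in which the bracketed integrand is precisely $Lf$ as in~\eqref{eq:generator-L}. Because $f$ and each $\dr_i f$ are smooth and compactly supported, the integrands $(\dr_i f)(X_s)$ are bounded, so the remaining sum of It\^o integrals against independent Brownian motions is a local martingale. Hence $f(X_t)-f(X_0)-\int_0^t(Lf)(X_s)\,ds$ is a local martingale for every $f\in C_c^\infty(M)$, which is exactly the statement that $X$ is an $L$-diffusion; the hypothesis $\zeta=+\infty$ guarantees that this holds for all $t\geq0$.

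The main obstacle is carrying out the Stratonovich-to-It\^o conversion intrinsically, i.e.\ justifying the covariation computation without a global coordinate system. The clean resolution is the self-referential use of the solution definition applied to $\dr_i f$ to extract the needed martingale part, or equivalently localizing to a chart (as already invoked for path continuity), where the computation reduces to the standard $\R^d$ identity $\dr_i(\dr_i f)=\sum_{a,b}\dr_i^a\dr_i^b\,\partial_a\partial_b f+\sum_{a,b}\dr_i^a(\partial_a \dr_i^b)\,\partial_b f$ together with the usual Wong--Zakai correction. One should also check that the quadratic covariation only sees the martingale part of $(\dr_i f)(X)$, the finite-variation contributions (including the It\^o corrections hidden inside the inner Stratonovich integrals) dropping out automatically.
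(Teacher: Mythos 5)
Your argument is correct and is the standard derivation of the generator of a Stratonovich SDE on a manifold: the paper gives no proof of this lemma but cites it from Thalmaier's notes, and your route (specializing the solution identity to the driver $(t,W^1,\dots,W^r)$, converting Stratonovich to It\^o, and extracting the quadratic covariation by applying the solution property a second time to $\dr_i f$) is essentially the argument behind the cited result. The only point to tighten is the last step: Definition~\ref{def:L-diffusions} requires a true martingale rather than a local one, but since you have already observed that the integrands $(\dr_i f)(X_s)$ are bounded for $f\in C_c^\infty(M)$, the It\^o integrals are genuine $L^2$-martingales and the conclusion follows.
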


\begin{definition}[$L$-diffusions; {\cite[Def.~1.3]{Thalmaier16}}]
	\label{def:L-diffusions}
	An adapted continuous process $(X_t)_{t\geq0}$  taking values in $M$ is called an \emph{$L$-diffusion} with starting point $x$ if $X_0(x)=x$ and if, for all $f \in C_c^{\infty}(M)$, the process
	\[
		N_t^f(x):=f\left(X_t(x)\right)-f(x)-\int_0^t(L f)\left(X_s(x)\right) d s, \quad t \geq 0
	\]
	is a martingale, i.e.
	\begin{equation}
		\label{eq:def:L-diffusion:mg-char}
		\mathbb{E}^{\mathscr{F}_s} \left[f\left(X_t(x)\right)-f\left(X_s(x)\right)-\int_s^t(L f)\left(X_r(x)\right) d r\right]=0, \quad \text { for all } 0\leq s \leq t.
	\end{equation}
\end{definition}

\noindent
From~\eqref{eq:def:L-diffusion:mg-char} it follows directly that $L$ is the infinitesimal generator of $X$ in the usual sense:
\begin{cor}[Alternative characterization of $L$]
	It follows that for all $f\in C_c^\infty(M)$
	\[
		Lf(x) = \lim_{t\downarrow 0}\frac1t \Expectation\pra[\Big]{\,f(X_t)-f(X_0)\,\Big|\, X_0=x\,}.
	\]
\end{cor}

\begin{remark}[Constructing the SDE from the generator]
	Given a generator of the form~\eqref{eq:generator-L}, one can straightforwardly read off the components $G_0,\dots,G_r$ and build the SDE~\eqref{eqdef:SDE-on-mf-driven-by-BM}. Therefore the generator also completely specifies the SDE~\eqref{eqdef:SDE-on-mf-driven-by-BM}.
\end{remark}

\begin{remark}[Finite lifetime]
	The restriction in Lemma~\ref{l:SDE-is-L-diffusion} to processes with infinite lifetime is necessary, since  if the process $X$ can escape to infinity in finite time, then the probability that $X_t$ is in $M$ eventually becomes strictly smaller than $1$. This can be remedied by extending~$M$ with a `graveyard' state, but  since this provides no additional insight into the geometry of the SDE we do not pursue this here.
\end{remark}

\bigskip

In conclusion, there are two ways to characterize stochastic differential equations on a manifold $M$:
\begin{itemize}
	\item Directly as solutions of a Stratonovich SDE, as in Definition~\ref{def:solution-of-SDE};
	\item Indirectly via their infinitesimal generator $L$, as in Definition~\ref{def:L-diffusions}.
\end{itemize}
Lemma~\ref{l:SDE-is-L-diffusion} states that if the lifetime is infinite, then these two are equivalent (and have unique solutions).

\section{Geometrization}\label{sec:geometrization}

We now formulate a geometrically consistent version of the GENERIC SDE~\eqref{eq:generic-sde}. As described in Section~\ref{sec:generic-sde}, the condition~\eqref{eq:divergenceJ} is not coordinate-invariant, and in the setup of this paper we remedy this by introducing a volume form $\nu$. We comment on this choice in more detail in Section~\ref{ss:unimodularity}.

\subsection{Components of the geometric GENERIC SDE}
In this paper, a geometric GENERIC SDE (gGENERIC SDE) is characterized by:
\begin{itemize}
	\item A smooth finite-dimensional manifold $M$ without boundary,
	\item a \emph{degenerate co-metric} tensor $\K : T^*M \oti T^*M \to \mathbb{R}$, that is, a non-negative definite, symmetric contravariant 2-tensor field on $M$,
	\item a \emph{degenerate Poisson structure} $\J : T^*M \oti T^*M \to \mathbb{R}$, that is, a closed, non-negative definite, contravariant alternating 2-tensor field on $M$ whose associated Poisson bracket of smooth functions $\{f,g\} := \J(df,dg)$ satisfies the Jacobi identity,
	\item  \emph{a non-degenerate volume form} $\nu$ on $M$, and
	\item two \emph{functions} $E, S \in C^\infty(M)$.
\end{itemize}
These are required to satisfy the additional conditions
\begin{itemize}
	\item \emph{Non-interaction} of $\J$ with $S$ and of $\K$ with $E$:
	      \begin{equation}
		      \label{eqdef:NIC-mf}
		      \J(dS, \cdot) = 0,
		      \quad
		      \K(dE, \cdot) = 0.
	      \end{equation}
	\item \emph{Unimodularity} of $\nu$:
	      \begin{equation}
		      \label{eq:ass:unimodularity-mf}
		      \div_\nu (\J(dh,\cdot)) = 0 \qquad \text{for all } h \in C^\infty(M).
	      \end{equation}
\end{itemize}
Here the divergence $\div_\nu$ is defined by $\calL_X \nu = \div_\nu(X)\, \nu$ for all $X\in\Gamma(TM)$, and it satisfies the following integration-by-parts formula.

\begin{lemma}[Integration by parts]
	In the setup above, for any $f\in C^\infty_c(M)$ and any vector field $X$ on $M$, we have
	\begin{equation}
		\label{eq:int-by-parts}
		\int _M X(f)\, \nu = -\int_M f\div_\nu (X) \, \nu.
	\end{equation}
\end{lemma}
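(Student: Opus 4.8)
The plan is to reduce the identity to Stokes' theorem on the boundaryless manifold $M$ by means of Cartan's magic formula. First I would record that, since $\nu$ is a volume form and hence of top degree, $d\nu = 0$ automatically. Cartan's homotopy formula $\calL_X = d\,\iota_X + \iota_X\, d$ then collapses to $\calL_X \nu = d(\iota_X \nu)$, so the defining relation $\calL_X\nu = \div_\nu(X)\,\nu$ for the divergence can be rewritten as
\begin{equation*}
	\div_\nu(X)\,\nu = d(\iota_X \nu).
\end{equation*}

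Next I would apply this to the vector field $fX$ and expand using the Leibniz rule for the exterior derivative. Since $\iota_{fX}\nu = f\,\iota_X\nu$, we get
\begin{equation*}
	d\bra[\big]{f\,\iota_X\nu} = df \wedge \iota_X\nu + f\,d(\iota_X\nu) = df\wedge\iota_X\nu + f\,\div_\nu(X)\,\nu.
\end{equation*}
The one piece of exterior algebra that needs care is the pointwise identity $df\wedge\iota_X\nu = X(f)\,\nu$. I would derive it by applying the antiderivation $\iota_X$ to the identically vanishing $(n+1)$-form $df\wedge\nu$ (where $n=\dim M$), which gives $0 = (\iota_X df)\,\nu - df\wedge\iota_X\nu = X(f)\,\nu - df\wedge\iota_X\nu$. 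Substituting this back yields the intermediate identity
\begin{equation*}
	d\bra[\big]{f\,\iota_X\nu} = X(f)\,\nu + f\,\div_\nu(X)\,\nu
\end{equation*}
(which incidentally re-proves the product rule $\div_\nu(fX) = X(f) + f\,\div_\nu(X)$ as a sanity check).

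Finally I would integrate this $n$-form identity over $M$. Because $f\in C^\infty_c(M)$, the form $f\,\iota_X\nu$ has compact support, and because $M$ has no boundary, Stokes' theorem gives $\int_M d\bra[\big]{f\,\iota_X\nu} = 0$. Integrating the displayed identity therefore produces $0 = \int_M X(f)\,\nu + \int_M f\,\div_\nu(X)\,\nu$, which is exactly~\eqref{eq:int-by-parts}. The argument is essentially routine; there is no deep obstacle, and the only hypotheses genuinely used are the compact support of $f$ (to legitimize the vanishing of the Stokes integral) and the absence of a boundary (so that no boundary term survives). These two points, together with the wedge identity $df\wedge\iota_X\nu = X(f)\,\nu$, are the spots where one must be careful.
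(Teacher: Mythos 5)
Your proof is correct and follows essentially the same route as the paper's: both reduce the identity to Stokes' theorem on the boundaryless $M$ by observing, via Cartan's magic formula, that the relevant top-degree form is exact. The only cosmetic difference is that the paper expands $\calL_X(f\nu)$ with the Lie-derivative product rule, whereas you expand $d(f\,\iota_X\nu)$ with the exterior-derivative Leibniz rule plus the interior-product identity $df\wedge\iota_X\nu = X(f)\,\nu$; since $\iota_X(f\nu)=f\,\iota_X\nu$, these are the same computation organized differently.
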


\begin{proof}
	Recall that the Lie derivative $\calL_{X}$ satisfies the general identity
	\[
		\calL_{X}(f \nu) = (\calL_{X} f) \nu + f (\calL_{X} \nu) = X(f) \nu + f (\calL_{X} \nu).
	\]
	Using Cartan's magic formula, we find
	\[
		\calL_{X}(f \nu) = d(\iota_{X} (f\nu)) + \iota_{X}(d(f\nu)) = d(\iota_{X} (f\nu)),
	\]
	the last identity following from the fact that $\tilde{\nu} := f\nu$ is a volume form (has top degree) and thus $d \tilde{\nu} = 0$.
	By Stokes' theorem we therefore find $\int_M \calL_X(f\nu)=0$, since $M$ has no boundary, and therefore
	\[
		0 = \int_M \calL_X(f\nu) = \int_M X(f)\, \nu + \int_M f(\calL_X\nu)
		= \int_M X(f)\, \nu + \int_M f\div_\nu (X) \, \nu,
	\]
	which proves~\eqref{eq:int-by-parts}.
\end{proof}

Before entering into the details of defining the stochastic gGENERIC evolution, observe that we can already formulate the deterministic gGENERIC equation on $M$ as
\begin{equation}
	\label{eq:gGENERIC-v2}
	\dot x = \J(dE, \cdot)|_x + \K(dS, \cdot)|_x,
\end{equation}
or in equivalent but more explicit form,
\begin{equation}
	\label{eq:gGENERIC}
	\frac d{dt} h(x(t)) = \J(dE, dh)(x(t)) + \K(dS, dh)(x(t)),
	\quad \text{for all } h\in C^\infty(M).
\end{equation}
These two equations should be seen as simple reformulations of~\eqref{eq:generic} in slightly different notation.

\bigskip
The conditions on $\K$, $\J$, $E$, and $S$, are all natural generalizations of the corresponding ones in Section~\ref{sec:generic}. The volume form $\nu$ and the unimodularity assumption, however,  are new additions, and they can be interpreted as follows.
For a given vector field~$X$, the statement $\div_\nu X = 0$ is equivalent to the preservation of the volume $\nu$ by the flow of~$X$~\cite[Proposition 6.5.18]{Abraham_1988}.
The unimodularity assumption therefore can be restated as the property that $\nu$ is conserved by all Hamiltonian vector fields $X_h :=\J(dh,\cdot)$ generated by functions  $h$.
In particular, it is important to emphasize at this point that the unimodularity condition as stated here, is a coordinate-free generalization of the condition $\div J = 0$ in~\eqref{eq:divergenceJ}, as we discuss in more detail in Remark~\ref{rmk:univalence-geometric} below.

\medskip

An important consequence of the unimodularity assumption~\eqref{eq:ass:unimodularity-mf} is the following proposition. This result is contained in \cite[Chapter 9.3]{Crainic2021} in the broader context of Poisson cohomology, but here we give a statement and proof that only require basic differential geometry.
\begin{prop}\label{prop:unimodularity}
	Let $M$ be a smooth manifold without boundary, $f \in C^\infty(M)$, and $h\in C_c^\infty(M)$.
	Then
	\[
		\int_M \J(df, dh)\nu = 0.
	\]
\end{prop}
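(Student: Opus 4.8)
The plan is to recognize the integrand $\J(df,dh)$ as the action of a Hamiltonian vector field on the compactly supported function $h$, and then to integrate by parts, using unimodularity to annihilate the resulting term. This keeps everything at the level of the two tools already at hand: the integration-by-parts formula~\eqref{eq:int-by-parts} and the unimodularity assumption~\eqref{eq:ass:unimodularity-mf}.

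First I would introduce the Hamiltonian vector field associated with $f$, namely $X_f := \J(df,\cdot) \in \Gamma(TM)$; this is a genuine vector field because contracting the contravariant $2$-tensor $\J$ with the single covector $df$ leaves an element of $T^{**}M \cong TM$. By the very definition of the contraction, pairing $X_f$ with the covector $dh$ reproduces the integrand,
\[
	\J(df,dh) = X_f(h).
\]
Next, since $h \in C_c^\infty(M)$, I would apply the integration-by-parts formula~\eqref{eq:int-by-parts} with the function $h$ and the vector field $X = X_f$:
\[
	\int_M \J(df,dh)\,\nu = \int_M X_f(h)\,\nu = -\int_M h\,\div_\nu(X_f)\,\nu.
\]
The unimodularity assumption~\eqref{eq:ass:unimodularity-mf} says precisely that $\div_\nu(\J(df,\cdot)) = \div_\nu(X_f) = 0$, so the right-hand side vanishes, which is the claim.

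The one point that requires care—and really the only decision in the argument—is which of the two functions carries the derivative when we integrate by parts. Because Stokes' theorem (and hence the formula~\eqref{eq:int-by-parts}) needs the function multiplying $\nu$ to be compactly supported, I must route the derivative onto $h$, i.e.\ write $\J(df,dh) = X_f(h)$ and integrate by parts in $h$. Using the antisymmetry of $\J$ to instead write $\J(df,dh) = -X_h(f)$ with $X_h := \J(dh,\cdot)$ would force an integration by parts against $f$, which is not assumed to be compactly supported and for which no boundary-free Stokes argument is available. So the entire content of the proposition is the bookkeeping that places the derivative on the compactly supported factor, after which unimodularity does the rest; I expect no genuine obstacle beyond making this choice explicit.
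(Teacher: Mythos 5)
Your proof is correct and follows essentially the same route as the paper's: introduce the Hamiltonian vector field $X_f=\J(df,\cdot)$, rewrite the integrand as $X_f(h)$, integrate by parts via~\eqref{eq:int-by-parts} onto the compactly supported factor $h$, and invoke unimodularity~\eqref{eq:ass:unimodularity-mf} to kill $\div_\nu(X_f)$. Your closing remark about why the derivative must land on $h$ rather than $f$ is a correct and worthwhile clarification of the hypothesis in the integration-by-parts lemma, but it does not change the argument.
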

\begin{proof}
	Let $X_f$ denote the Hamiltonian vector field associated to $f$ via the Poisson structure~$\J$, that is, for all smooth functions $h$, $X_f (h):= \J(df, dh)$.
	Then
	\begin{align}
		\int_M \J(df, dh) \nu & = \int_M X_f(h)\nu \nonumber
		\stackrel{\eqref{eq:int-by-parts}}= - \int_M h \div_\nu(X_f) \nu. \label{eq:int-J}
	\end{align}
	The unimodularity assumption \eqref{eq:ass:unimodularity-mf} guarantees that the right-hand side vanishes for all $f$, concluding the proof.
\end{proof}

Since we will be introducing an SDE, we fix for once and for all a filtered probability space $(\Omega,\calF,(\calF_t)_{t\geq0},\bbP)$ as in Section~\ref{s:sdes-on-mfs}.

\subsection{Sub-Riemannian Brownian motion generated by $\K$}
\label{ss:sub-Riemannian-BM}

We first construct the manifold equivalent of the Euclidean operator $f\mapsto \div (K\nabla f)$. The canonical generalisation to a Riemannian manifold would be the Laplace-Beltrami operator $\Delta_M := \div_g \circ \nabla_g$.
To adapt the theory to our setting of a degenerate co-metric $\K$, we need a canonical way to construct such operators.
Thankfully, the necessary theory has already been developed in the context of hypoelliptic diffusion and, more recently, sub-Riemannian geometry~\cite{Agrachev_2019}.
A self-contained introductory reference is \cite{Thalmaier16}.

To define the gradient and divergence operators, we need to introduce a metric and a volume.
We first construct the metric.
Define a map $\ssharp: T^*M \to \calH\subseteq TM$ by the property that for all $\alpha \in \Gamma(T^*M)$,
\[
	\ssharp \alpha = \K(\alpha, \cdot),
\]
where $\calH := \range \ssharp \subset TM$ is also a subbundle of $TM$.
With this at hand, we can define the \emph{(sub-Riemannian) metric} tensor $g_\calH$ on $\calH\subseteq TM$ by
\[
	g_\calH(\ssharp \alpha, \ssharp \beta) = \K(\alpha, \beta),
	\qquad\text{for all }\alpha, \beta \in \Gamma(T^*M).
\]
The subbundle $\calH$ is called the \emph{horizontal distribution} and the triple $(M, \calH, g_\calH)$ is commonly known as a \emph{sub-Riemannian manifold}.

\begin{remark}
	Note that it is common in sub-Riemannian geometry to assume that the distribution $\calH$ is \emph{bracket-generating}, that is, the Lie algebra generated by the horizontal vector fields and their iterated Lie-brackets spans $TM$. We are \textbf{not} making this assumption here, as it is not necessary for the definition of the sub-Laplacian or the stochastic processes we are interested in, and in our application to GENERIC SDEs it will in fact never be satisfied (see Remark~\ref{rem:energy-is-conserved}).
\end{remark}

With this metric, the \emph{(horizontal) gradient} of a function $f \in C^\infty(M)$ is defined as the unique vector field $\nabla_\calH f \in \Gamma(TM)$ such that
\begin{equation}
	\label{eqdef:gradient-H}
	g_\calH(\nabla_\calH f, X) = X(df), \quad \text{for all } X \in \Gamma(\calH).
\end{equation}
In particular,
\[
	\nabla_\calH f = \ssharp df = \K(df,\cdot)
	\qquad\text{and}\qquad
	g_{\calH}(\nabla_\calH f, \nabla_\calH g) = \K(df,dg),
\]
and the integration-by-parts formula~\eqref{eq:int-by-parts} implies that if either $f$ or $h$ has compact support, then
\begin{equation}
	\label{eq:int-by-parts-K}
	\int_M f \div_\nu (\nabla_\calH h )\, \nu
	= -\int_M  \K(df,dh)\, \nu.
\end{equation}

For the volume we choose the volume form $\nu$ that was introduced at the start of this section.
Indeed, the degeneracy condition~\eqref{eqdef:NIC-mf} implies that the tensor $\K$ is not invertible and therefore there is no equivalent of a Riemannian volume. This is a mathematical indication that the choice of the volume form $\nu$ is indeed a choice in which we have a certain freedom. In fact, we claim that this choice is a \emph{modelling} choice, and we comment on this in more detail in Sections~\ref{ss:unimodularity} and~\ref{ss:math-vs-modelling}.

Given the metric $g_\calH$ and the volume form $\nu$, the sub-Laplacian $\Delta_\calH f$ is the operator on $L^2(M; \nu)$ with domain $C_c^\infty(M)$ defined as the divergence of the horizontal gradient, that is,
\begin{equation}\label{eq:sub-Laplacian}
	\Delta_\calH f = \div_\nu(\nabla_\calH f) = \div_\nu(\ssharp df).
\end{equation}

It is possible to give a more explicit local characterization of $\Delta_\calH$ in terms of a local orthonormal frame (pointwise local orthonormal basis) of $\calH$, see e.g.~\cite[Remark 1.30]{Laurent2022} or~\cite[Section 2.2]{Agrachev_2009}, as follows.
\begin{lemma}\label{lem:DeltaH-local}
	Given a local horizontal frame of $\calH$, $\{A_1, \ldots, A_r\}\subset \Gamma(\calH)$, the operator $\Delta_\calH$ can always be expressed as the H\"ormander type operator
	\begin{equation}
		\label{eq:char-DeltaH-Ai}
		\Delta_\calH f = -\sum_{i=1}^r A_i^* A_i f = \sum_{i=1}^r A_i (A_i f) + A_0 f, \qquad r =\rank \calH,
	\end{equation}
	where $A_i^* = -A_i - \div_\nu(A_i)$ is the adjoint of $A_i$ in $L^2(M;\nu)$ and $A_0 := \sum_{i=1}^r \div_\nu(A_i) A_i$.
\end{lemma}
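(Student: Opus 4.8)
The plan is to reduce everything to two local computations: expanding the horizontal gradient in the given frame, and applying a Leibniz rule for the volume divergence $\div_\nu$. Throughout I take $\{A_1,\dots,A_r\}$ to be a local \emph{orthonormal} frame of $\calH$, i.e.\ $g_\calH(A_i,A_j)=\delta_{ij}$, in keeping with the discussion preceding the lemma; such a frame exists locally because $\calH$ is a rank-$r$ subbundle and any local frame can be orthonormalized with respect to $g_\calH$.

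First I would establish the frame expansion
\[
	\nabla_\calH f = \sum_{i=1}^r (A_i f)\,A_i.
\]
Since $\nabla_\calH f \in \Gamma(\calH)$, it is a combination $\sum_i c_i A_i$ with $c_i = g_\calH(\nabla_\calH f, A_i)$ by orthonormality; the defining property~\eqref{eqdef:gradient-H} of the gradient, applied with $X = A_i$, identifies $c_i = A_i f$.

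Next I would record the Leibniz rule
\[
	\div_\nu(\phi X) = X(\phi) + \phi\,\div_\nu(X), \qquad \phi\in C^\infty(M),\ X\in\Gamma(TM).
\]
This follows from Cartan's formula exactly as in the integration-by-parts lemma: writing $\calL_{\phi X}\nu = d\,\iota_{\phi X}\nu = d(\phi\,\iota_X\nu) = d\phi\wedge\iota_X\nu + \phi\,d\,\iota_X\nu$ and invoking the top-form identities $d\phi\wedge\iota_X\nu = (X\phi)\,\nu$ and $d\,\iota_X\nu = \calL_X\nu = \div_\nu(X)\,\nu$. Applying this with $\phi = A_i f$ and $X = A_i$, summing over $i$, and using the definition~\eqref{eq:sub-Laplacian} of the sub-Laplacian, I obtain
\[
	\Delta_\calH f = \div_\nu(\nabla_\calH f) = \sum_{i=1}^r \bigl[A_i(A_i f) + \div_\nu(A_i)(A_i f)\bigr] = \sum_{i=1}^r A_i(A_i f) + A_0 f,
\]
which is the claimed H\"ormander form with $A_0 = \sum_i \div_\nu(A_i)\,A_i$.

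Finally, for the first equality I would verify that $A_i^* = -A_i - \div_\nu(A_i)$ is the adjoint of $A_i$ in $L^2(M;\nu)$: applying~\eqref{eq:int-by-parts} to the product rule $A_i(fg) = (A_i f)g + f(A_i g)$ yields $\int_M (A_i f)g\,\nu = \int_M f\,(-A_i - \div_\nu(A_i))g\,\nu$ for $f,g\in C_c^\infty(M)$. Then $-A_i^*(A_i f) = A_i(A_i f) + \div_\nu(A_i)(A_i f)$, and summing over $i$ recovers the middle expression. I do not anticipate a genuine obstacle here; the two points demanding care are the orthonormality of the frame, without which the coefficients in the gradient expansion are not simply $A_i f$, and the clean derivation of the Leibniz rule for $\div_\nu$ via the top-form identities.
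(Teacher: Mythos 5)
Your proof is correct. Note that the paper itself does not prove this lemma at all: it states it and defers to the sub-Riemannian literature (the cited remarks of Laurent and Agrachev et al.), so your self-contained argument is a genuine addition rather than a variant of an existing proof. The three ingredients you use are exactly the right ones and are all available from the surrounding text: the frame expansion $\nabla_\calH f = \sum_i (A_i f)\,A_i$, which follows from the defining property~\eqref{eqdef:gradient-H} of the horizontal gradient together with positive definiteness of $g_\calH$ on $\calH$ (which holds because $\K(\alpha,\alpha)=0$ forces $\ssharp\alpha=0$ by Cauchy--Schwarz for the non-negative form $\K$); the Leibniz rule $\div_\nu(\phi X)=X(\phi)+\phi\,\div_\nu(X)$, correctly derived from Cartan's formula and the top-form identities, in the same spirit as the paper's integration-by-parts lemma; and the adjoint computation via~\eqref{eq:int-by-parts}. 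You are also right to insist on orthonormality of the frame: the lemma's statement says only ``local horizontal frame,'' but without $g_\calH(A_i,A_j)=\delta_{ij}$ the coefficients in the gradient expansion are not $A_i f$ and the formula fails; this reading is the intended one, as confirmed by the sentence preceding the lemma and by the identity~\eqref{eq:relation-Ai-K}.
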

A consequence of the setup above is the identity
\begin{equation}
	\label{eq:relation-Ai-K}
	\sum_{i=1}^d (A_if) (A_ih)
	= g_\calH(\nabla_\calH f, \nabla_\calH h)
	= \K(df,dh) \qquad \text{for all }f,h\in C^\infty(M).
\end{equation}

The operator $\Delta_\calH$ is a symmetric (unbounded) operator on $L^2(M;\nu)$, since for all $f,h\in C_c^\infty(M)$ we have
\begin{align}
	\label{eq:char:DeltaH-Ais}
	(f,\Delta_\calH h)_{L^2_\nu}
	 & = \int_M f (\Delta_\calH h) \,\nu \stackrel{\eqref{eq:char-DeltaH-Ai}}= -\sum_{i=1}^r \int_M h \, A_i^* A_i f \,\nu
	= -\sum_{i=1}^r \int_M (A_i h) (A_i f) \,\nu.
\end{align}
It can be interpreted as the derivative of the quadratic (Dirichlet) form
\begin{equation}
	\label{eqdef:Dirichlet-form-1}
	\calE(f,f) = \frac12 \int_M g_\calH(\nabla_\calH f, \nabla_\calH f) \,\nu.
\end{equation}
As we saw in Theorem~\ref{t:ex-in-SDE} and Lemma~\ref{l:SDE-is-L-diffusion}, there exists a diffusion process in $M$ whose generator is the operator~$\Delta_\calH$.

\subsection{Defining the geometric GENERIC SDE}

Recall from the start of this section that we are given a manifold $M$, tensors $\K$ and $\J$, functionals $E$ and~$S$, and a volume form $\nu$, with various properties. The previous section introduces the corresponding operator $\Delta_\calH$.

We now define the geometric GENERIC SDE (gGENERIC SDE) in the two equivalent ways that are described at the end of Section~\ref{s:sdes-on-mfs}.

\begin{definition}[gGENERIC SDE, generator definition]
	\label{def:mfGSDE-generator}
	The gGENERIC SDE is the stochastic process on $M$ with infinitesimal generator
	\begin{equation}
		\label{eq:generator-mf}
		Lf = \J(dE,df) + \K(dS,df) + \Delta_\calH f \qquad\text{for any }f\in C_c^\infty(M),
	\end{equation}
	if it exists.
\end{definition}
\noindent
Note how this expression mirrors the generator~\eqref{eqdef:generator-sde-Euclidean} in the Euclidean case.

\begin{definition}[gGENERIC SDE, SDE definition]
	\label{def:mfGSDE-SDE}
	Recall from Section~\ref{ss:sub-Riemannian-BM} the vector fields $A_i$ that characterize the operator $\Delta_\calH$. Define the vector field $B_0$ by
	\begin{align}
		B_0 & = \J(dE,\cdot) + \K(dS,\cdot) +  \sum_{i=1}^d (\div_\nu  A_i) A_i
		= \J(dE,\cdot) + \K(dS,\cdot) + A_0,
	\end{align}
	The gGENERIC SDE then is the Stratonovich SDE
	\begin{subequations}
		\label{eqdef:two-versions-of-gGENERIC-SDE}
		\begin{equation}
			\label{eqdef:GENERIC-sde-mf}
			d X_t=B_0 (X_t)\, dt + \sqrt 2 \sum_{i=1}^d A_i(X_t) \strat dW^i_t,
		\end{equation}
		or equivalently in one line
		\begin{equation}
			\label{eqdef:sde-mf-with-JK}
			d X_t= \bra[\Big]{\J(dE,\cdot) + \K(dS,\cdot) +  \sum_{i=1}^d (\div_\nu  A_i) A_i}\, dt + \sqrt{2}\sum_{i=1}^d A_i(X_t) \strat dW^i_t,
		\end{equation}
	\end{subequations}
	where $W^i$, $i=1, \dots, d$, are independent standard Brownian motions.
\end{definition}

The following lemma connects the two formulations.
\begin{lemma}
	If the process~\eqref{eqdef:GENERIC-sde-mf} has infinite lifetime ($\zeta=+\infty$ almost surely) then its infinitesimal generator is the operator $L$ defined in~\eqref{eq:generator-mf}, and the process is an $L$-diffusion.
	The generator $L$ can alternatively be written as
	\begin{equation}
		\label{eq:l:char:L1}
		Lf = B_0 f + \sum_{i=1}^d A_i(A_i f), \qquad \text{for all }f\in C^\infty(M)
	\end{equation}
	or as
	\begin{equation}
		\label{eq:l:char:L2}
		Lf = \J(dE,df) + \K(dS,df)  -  \sum_{i=1}^d A_i^*(A_i f), \qquad \text{for all }f\in C^\infty(M),
	\end{equation}
	where $A_i^*$ is again the adjoint vector field of $A_i$ in $L^2(M,\nu)$.

	Moreover, Definitions~\ref{def:mfGSDE-generator} and~\ref{def:mfGSDE-SDE} are equivalent whenever the solutions of the SDE have infinite lifetime.
\end{lemma}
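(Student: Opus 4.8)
The plan is to derive the entire statement from the two results already in hand: Lemma~\ref{l:SDE-is-L-diffusion}, which gives the generator of a Brownian-driven Stratonovich SDE, and Lemma~\ref{lem:DeltaH-local}, which supplies the H\"ormander-type local form of $\Delta_\calH$. The whole claim then reduces to matching the drift and diffusion coefficients of the SDE against the three candidate expressions for $L$, so the work is bookkeeping rather than analysis.

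First I would apply Lemma~\ref{l:SDE-is-L-diffusion} directly to the SDE~\eqref{eqdef:GENERIC-sde-mf}. Comparing with the generic form~\eqref{eqdef:SDE-on-mf-driven-by-BM}, the drift vector field is $\dr_0 = B_0$ and the noise vector fields are $\dr_i = \sqrt 2\,A_i$ for $i=1,\dots,d$. The factor $\tfrac12$ in~\eqref{eq:generator-L} cancels against the square of $\sqrt 2$, so that
\[
	Lf = B_0 f + \tfrac12\sum_{i=1}^d (\sqrt 2\,A_i)(\sqrt 2\,A_i f) = B_0 f + \sum_{i=1}^d A_i(A_i f),
\]
which is precisely~\eqref{eq:l:char:L1}, and the same lemma simultaneously yields the $L$-diffusion property under the infinite-lifetime hypothesis.

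Next I would substitute $B_0 = \J(dE,\cdot)+\K(dS,\cdot)+A_0$ and recognize the second-order part. By the first equality in~\eqref{eq:char-DeltaH-Ai} we have $\Delta_\calH f = \sum_{i} A_i(A_i f) + A_0 f$, so the horizontal terms of $B_0 f + \sum_i A_i(A_i f)$ collapse to $\Delta_\calH f$, leaving
\[
	Lf = \J(dE,df) + \K(dS,df) + \Delta_\calH f,
\]
which is exactly the generator~\eqref{eq:generator-mf} of Definition~\ref{def:mfGSDE-generator}. For the alternative form~\eqref{eq:l:char:L2} I would instead invoke the adjoint characterization in Lemma~\ref{lem:DeltaH-local}, namely $\Delta_\calH f = -\sum_i A_i^* A_i f$ with $A_i^* = -A_i - \div_\nu(A_i)$; inserting this into~\eqref{eq:generator-mf} reproduces~\eqref{eq:l:char:L2} immediately.

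The equivalence of the two definitions is then formal: Definition~\ref{def:mfGSDE-SDE} produces, through the computation above, a process whose generator is exactly the $L$ of Definition~\ref{def:mfGSDE-generator}; conversely, the components $(B_0;\,\sqrt 2\,A_1,\dots,\sqrt 2\,A_d)$ are read off from $L$ as in the remark following Lemma~\ref{l:SDE-is-L-diffusion}, and uniqueness of the resulting diffusion (Theorem~\ref{t:ex-in-SDE}) closes the loop. I do not expect a genuine obstacle here; the only points demanding care are the $\sqrt 2$ versus $\tfrac12$ rescaling in passing from Stratonovich coefficients to the generator, and the double appearance of $A_0$---once explicitly inside $B_0$ and once inside the H\"ormander decomposition of $\Delta_\calH$. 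Keeping the horizontal frame $\{A_1,\dots,A_d\}$ indexed consistently between~\eqref{eq:char-DeltaH-Ai} and the SDE is the only place a reader might stumble.
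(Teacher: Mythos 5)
Your proposal is correct and follows essentially the same route as the paper's own (very terse) proof: it combines Lemma~\ref{l:SDE-is-L-diffusion} applied to the coefficients $(B_0,\sqrt2\,A_i)$ with the two characterizations of $\Delta_\calH$ from Lemma~\ref{lem:DeltaH-local}, and closes the equivalence of the two definitions by the generator/SDE correspondence of Section~\ref{s:sdes-on-mfs}. The only difference is one of presentation --- you start from the SDE and derive~\eqref{eq:l:char:L1} first, whereas the paper starts from~\eqref{eq:generator-mf} and rewrites --- which is immaterial.
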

\begin{proof}
	To show~\eqref{eq:l:char:L2} we use the first identity in~\eqref{eq:char-DeltaH-Ai}; to show~\eqref{eq:l:char:L1} we additionally use the characterization of $A_i^*$ after~\eqref{eq:char-DeltaH-Ai}. The equivalence of the formulations was already described in Section~\ref{s:sdes-on-mfs}.
\end{proof}

\begin{remark}
	By a reduction to coordinate charts we find that sample paths are almost surely continuous.
\end{remark}

\subsection{Properties of the process}

\label{ss:props-of-sde-mf}
In the introduction we formulated two important properties of the process~\eqref{eq:generic-sde} that we wish to preserve in the manifold generalization, the almost-sure conservation of $E$ and the stationarity of $e^{S(x)}dx$. Historically, there has been an implicit assumption that under a coordinate change from $x$ to $x = \phi(y)$ the Lebesgue measure ``$dx$'' transforms into another copy of the Lebesgue measure ``$dy$'', i.e.\ that the measure $e^{S(x)}dx$ transforms into $e^{S(\phi(y))}dy$. Since these two measures are actually different, this has led e.g.\ \"Ottinger to propose to correct for the difference by changing $S$~\cite[(1.163)]{Oettinger05}.

As described in Section~\ref{sec:generic-sde}, in this paper we take a different approach and introduce the volume form $\nu$ instead: we require that the measure $e^{S(x)}\nu(dx)$ is preserved by the flow. In this section we show that both this property and the conservation of $E$ hold for the manifold SDE~\eqref{eqdef-general:SDE-on-M}. In fact, the combination implies an even stronger property: any measure of the form $h(E)\, e^{S}\nu$ is preserved by the flow.

\medskip

\begin{lemma}[Conservation of $E$]
	Let $X$ be the solution of the SDE~\eqref{eqdef:GENERIC-sde-mf} according to Definition~\ref{def:solution-of-SDE} with maximal time of existence $\zeta$. We then have almost surely
	\[
		E(X_t) = E(X_0) \qquad \text{for all }0\leq t<\zeta.
	\]
\end{lemma}

\begin{proof}
	We first note that the vector fields $A_i$ preserve $E$:
	\[
		A_i(dE) \stackrel{\eqref{eqdef:gradient-H}}= g_\calH (\nabla_\calH E, A_i) \stackrel{(*)}= 0,
	\]
	where $(*)$ follows from the non-interaction conditions \eqref{eqdef:NIC-mf} and the definition of the metric $g_\calH$ by observing that for all $\alpha_i\in\Gamma(T^*M)$ such that $\ssharp \alpha_i = A_i$, we have
	\[
		g_\calH (\nabla_\calH E, A_i) = g_\calH (\ssharp dE, \ssharp \alpha_i) = \K(dE,\alpha_i) = 0.
	\]
	For the same reason,
	\[
		A_0(dE) = \sum_{i=1}^d \div_\nu(A_i) A_i(dE) = 0.
	\]
	The same then holds for the vector field $B_0$, since
	\[
		B_0(dE) = \underbrace{\J(dE,dE)}_{=0 \text{ since alternating}} + \;\underbrace{\K(dS,dE)}_{=0 \text{ by }\eqref{eqdef:NIC-mf}}\;  +\; A_0(dE) = 0.
	\]
	We then apply the It\^o-Stratonovich identity~\eqref{eqdef:soln-of-SDE-Ito-formula}  to the function $E$ to find for $0\leq t < \zeta$
	\[
		E(X_t) -E(X_0) = \int_0^t (dE)_{X_s} B_0(X_s) \, ds
		+
		\sqrt 2 \sum_{i=1}^d \int_0^t (dE)_{X_s} A_i(X_s) \strat dW^i_s = 0 .
		\qedhere
	\]
\end{proof}

\begin{remark}[$\calH$ is never bracket-generating]
	\label{rem:energy-is-conserved}
	Since $E$ is almost surely preserved, the process $X$ is not not transitive, but remains confined to  level sets of $E$. Consequently, in this setup the distribution $\calH$ is never bracket-generating. This is also apparent in a crucial step of the proof. Since all $A_i$s preserve $E$, also their commutators would, and thus, for a non integrable distribution, this would mean that all vectors in the tangent space preserve $E$.  In other words, when the distribution is nonintegrable, $g_\calH (\nabla_\calH E, \nabla_\calH E) = 0$ if and only if $E$ is constant, making the whole construction either trivial or impossible. However, nothing in our construction prevents the restricted distribution on a level set of $E$ to be bracket-generating.
\end{remark}

\begin{lemma}[Invariance of all measures $h(E)e^{S}\nu$]
	\label{l:invariance-ebetaS}
	Assume that the SDE~\eqref{eqdef:GENERIC-sde-mf} has infinite lifetime.
	Then for any $h\in C^\infty(E)$ it   preserves the measure $h(E)e^{S}\nu$ on~$M$.
\end{lemma}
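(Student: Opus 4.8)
The plan is to characterize invariance through the generator: for the $L$-diffusion of Definition~\ref{def:mfGSDE-generator} with infinite lifetime, the measure $\mu := h(E)e^S\nu$ is preserved by the flow precisely when $\int_M (Lf)\,\mathrm d\mu = 0$ for every $f\in C_c^\infty(M)$, with $L$ as in~\eqref{eq:generator-mf}. Writing $\rho := h(E)e^S$, so that $\mu = \rho\,\nu$, I would therefore aim to show
\[
	\int_M \bra[\big]{\J(dE,df) + \K(dS,df) + \Delta_\calH f}\,\rho\,\nu = 0,
\]
treating the Hamiltonian term and the dissipative/diffusive pair separately. The one computation I would prepare in advance is $d\rho = e^S\bra[\big]{h'(E)\,dE + h(E)\,dS}$, which is what makes the two non-interaction conditions in~\eqref{eqdef:NIC-mf} bite.

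For the dissipative pair, I first integrate by parts in the $\Delta_\calH$ term. Since $f$ has compact support, so does $\nabla_\calH f$, hence $\rho\,\nabla_\calH f$ is compactly supported and $\int_M \div_\nu(\rho\,\nabla_\calH f)\,\nu = 0$ by Stokes; expanding the divergence by Leibniz gives $\int_M \rho\,\Delta_\calH f\,\nu = -\int_M (\nabla_\calH f)(\rho)\,\nu$. Using $\nabla_\calH f = \ssharp df$ and $(\nabla_\calH f)(\rho) = \K(df,d\rho)$, I substitute $d\rho$: the $h'(E)\,\K(df,dE)$ contribution dies by $\K(dE,\cdot)=0$ in~\eqref{eqdef:NIC-mf}, leaving $(\nabla_\calH f)(\rho) = \rho\,\K(dS,df)$ after using the symmetry of $\K$. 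Thus the $\Delta_\calH$ term equals $-\int_M \rho\,\K(dS,df)\,\nu$ and cancels the $\K(dS,df)$ term exactly. (This is essentially the weighted version of~\eqref{eq:int-by-parts-K}.)

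For the Hamiltonian term I would use that the Hamiltonian vector field $X_E = \J(dE,\cdot)$ is a derivation, so $\J(dE,d(f\rho)) = \rho\,\J(dE,df) + f\,\J(dE,d\rho)$. Substituting $d\rho$ again, $\J(dE,d\rho) = e^S\bra[\big]{h'(E)\J(dE,dE) + h(E)\J(dE,dS)} = 0$, where $\J(dE,dE)=0$ by antisymmetry and $\J(dE,dS)=-\J(dS,dE)=0$ by the first condition in~\eqref{eqdef:NIC-mf}. Hence $\rho\,\J(dE,df) = \J(dE,d(f\rho))$, and since $f\rho\in C_c^\infty(M)$ and $E\in C^\infty(M)$, Proposition~\ref{prop:unimodularity} gives $\int_M \J(dE,d(f\rho))\,\nu = 0$. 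Adding the three contributions yields $\int_M (Lf)\,\rho\,\nu = 0$.

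The step I expect to be most delicate is not any single term but the bookkeeping of support: $\rho$ is smooth but \emph{not} compactly supported, so every integration by parts and every appeal to Proposition~\ref{prop:unimodularity} must route the compact support through $f$ (via $\nabla_\calH f$, respectively $f\rho$), which is exactly why I fold $\rho$ into the arguments \emph{before} integrating rather than differentiating it out. The one remaining gap is the standard passage from the infinitesimal identity $\int_M (Lf)\,\mathrm d\mu = 0$ on the core $C_c^\infty(M)$ to genuine invariance of the law $(X_t)_\#\mu = \mu$; under the infinite-lifetime assumption this follows from the martingale characterization~\eqref{eq:def:L-diffusion:mg-char} and the associated semigroup, and I would invoke it rather than reprove it.
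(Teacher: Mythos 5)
Your proposal is correct and follows essentially the same route as the paper's proof: both reduce invariance to $\int_M (Lf)\,h(E)e^S\nu=0$ for $f\in C_c^\infty(M)$, kill the Hamiltonian term by folding the density into the differential and applying Proposition~\ref{prop:unimodularity} together with skew-symmetry and the non-interaction conditions, and cancel the diffusive term against the $\K(dS,df)$ drift via the weighted integration by parts. The only cosmetic difference is that the paper phrases the diffusion computation through the frame $A_i$ and the identity~\eqref{eq:relation-Ai-K}, whereas you work directly with $\nabla_\calH$ and $\div_\nu$; these are the same calculation.
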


In particular, the case $h\equiv 1$ implies that the measure $e^{S}\nu$ is preserved.

\begin{proof}
	Fix $h\in C^\infty(\R)$.
	We prove the invariance of $h(E)e^{S}\nu$ by showing that for all $f\in C_c^\infty(M)$ we have
	\begin{equation}
		\label{eq:ebetaS-invariant}
		\int_M (Lf)\, h(E)\,e^{S} \nu = 0.
	\end{equation}
	Writing
	\begin{equation}
		\label{eq:to-prove:invariance-ebetaS}
		\int_M (Lf)\, h(E)\, e^{S} \nu
		= \int_M \bra[\Big]{\J(dE,df) + \K(dS,df)}\, h(E)\, e^{S}\nu \;-\; \sum_{i=1}^d \int_M A_i^*(A_if) \,h(E)\,e^{S}\nu,
	\end{equation}
	we first note that the integral involving $\J$ vanishes by the assumptions of skew-symmetry, non-interaction~\eqref{eqdef:NIC-mf}, and unimodularity~\eqref{eq:ass:unimodularity-mf}:
	\begin{multline*}
		\int_M \J(dE,df)\,h(E)\,e^{S}\nu
		= \underbrace{\int_M \J\bra[\big]{dE,d(fh(E)e^{S})}\,\nu}_{=0\text{ by Proposition~\ref{prop:unimodularity}}}
		\;-  \int_M \underbrace{\J(dE,dS)}_{=0\text{ by }\eqref{eqdef:NIC-mf}}\,f\,h(E)\,e^{S}\nu \\
		-   \int_M \underbrace{\J(dE,dE)}_{=0\text{ by skew symmetry}}\,f\,h'(E)\,e^{S}\nu =0.
	\end{multline*}
	By rewriting the final term in~\eqref{eq:to-prove:invariance-ebetaS} as
	\begin{align*}
		- \sum_{i=1}^d \int_M A_i^*(A_if) \,h(E)\,e^{S}\nu
		 & = -  \sum_{i=1}^d \int_M (A_if) \bra[\big]{A_i (h(E)e^{S})}\,\nu             \\
		 & \leftstackrel{\eqref{eq:relation-Ai-K}}= -  \int_M \K(df,d(h(E)e^{S}))\, \nu \\
		 & = - \int_M \K(df,dS) \,h(E)\, e^{S} \nu
		-  \int_M \underbrace{\K(df,dE)}_{=0\text{ by }\eqref{eqdef:NIC-mf}} \,h'(E)\, e^{S} \nu,
	\end{align*}
	we observe that it cancels  with the integral of $\K$ in~\eqref{eq:to-prove:invariance-ebetaS}, implying~\eqref{eq:ebetaS-invariant}.
\end{proof}

When expressing the generator of the metriplectic process in \eqref{eq:generator-mf} and \eqref{eq:l:char:L2}, the metric structure seems to have a dual role, defining the diffusion part of the generator via $\Delta_\calH$, and also explicitly appearing in the drift part via $\K(dS,\cdot)$.
This is an artifact of the choice of volume~$\nu$, instead of a preserved volume from Lemma~\ref{l:invariance-ebetaS}.
We can in fact, absorb all terms related to the metric structure into a single sub-Laplacian operator, defined with respect to a the volume form $e^S\nu$. Define
\begin{equation}
	\label{eq:def:DeltaH-betaS}
	\Delta_{\calH,S} f := \Delta_\calH f +  g_\calH(\nabla_\calH f ,\nabla_\calH S)
	= \Delta_\calH f +  \K(df,dS).
\end{equation}
Then \eqref{eq:generator-mf} can be rewritten as
\[
	L f = \J(dE,df) + \Delta_{\calH,S} f.
\]
Following the earlier remark that $\Delta_\calH$ can be interpreted as the derivative of the Dirichlet form~\eqref{eqdef:Dirichlet-form-1}, we can interpret $\Delta_{\calH,S}$ as the derivative of the Dirichlet form
\[
	\calE_{S}(f, h) = \frac12 \int_M g_\calH(\nabla_\calH f, \nabla_\calH h) \, e^{S} \nu
\]
on $L^2(M; e^{S} \nu)$,
emphasizing the deep centrality of the invariant volume $e^{S}\nu$ in driving the diffusion part of the dynamics.

\subsection{The Euclidean case}

\subsubsection{Reduction to the Euclidean case}
We now show that when the manifold $M$ is $\R^d$ and the volume form $\nu$ is the Euclidean one, the geometric GENERIC SDE~\eqref{eqdef:GENERIC-sde-mf} reduces to the Euclidean SDE~\eqref{eq:generic-sde}. In fact we show a stronger statement, in which the volume $\nu$ is arbitrary, and we comment on how this formulation transforms in $\R^d$.

Consider therefore the setup of this section with $M=\R^d$, and with an arbitrary smooth non-degenerate volume form $\nu$.
We will show that the Euclidean, It\^o-interpretation version of~\eqref{eq:generic-sde} (see also~\cite[Rem.~4.6]{MielkePeletierZimmer25}) is
\begin{equation}\label{eq:generic-sde-nu}
	dX_t = \bra[\Big]{J(X_t)\, DE(X_t) + K(X_t)\, DS(X_t) +  (\div_\nu K)(X_t)}\, dt + \Sigma(X_t) \, dB_t.
\end{equation}
Note the only difference: the `flat' divergence $\div K$ in~\eqref{eq:generic-sde} has been replaced by the $\nu$-divergence $\div_\nu K$ above. When $\nu$ is the standard volume form on $\R^d$ these two coincide.

We now show that the SDE~\eqref{eqdef:sde-mf-with-JK} reduces to~\eqref{eq:generic-sde-nu}.
The vector fields $J(dE,\cdot)$ and $K(dS,\cdot)$ on the right-hand side of~\eqref{eqdef:sde-mf-with-JK} clearly reduce to their counterparts in~\eqref{eq:generic-sde-nu}, and we therefore only need to show that the terms
\begin{equation}
	\label{eq:dissipative-part-sde-mf}
	\sum_{i=1}^d (\div_\nu  A_i) A_i\, dt + \sqrt{2}\sum_{i=1}^d A_i(X_t) \strat dW^i_t
\end{equation}
reduce to the corresponding terms in~\eqref{eq:generic-sde-nu},
\begin{equation}
	\label{eq:dissipative-part-sde-Rd}
	\div_\nu K(X_t)\, dt + \Sigma(X_t) \, dW_t.
\end{equation}
Note that the Stratonovic interpretation of the first expression has changed into the It\^o interpretation of the second.

We start by interpreting the Euclidean matrix field $\Sigma$ in~\eqref{eq:dissipative-part-sde-Rd} in terms of the $A_i$ that appear as mobilities in~\eqref{eqdef:two-versions-of-gGENERIC-SDE}, by setting for any $f\in C^\infty(\R^d)$
\begin{equation}
	\label{eqdef:Sigma-in-terms-of-A_i}
	\Sigma(x) D f(x) :=  \sqrt{2} \sum_{i=1}^d e_i\, (A_if)(x),
\end{equation}
where the $e_i$ are an orthonormal basis of $\R^d$ in the standard metric. It follows that for any $f$ and $h$
\[
	D f^* \Sigma\Sigma^* D h
	\stackrel{\eqref{eqdef:Sigma-in-terms-of-A_i}}= 2\sum_{i=1}^d (A_if)(A_i h) \stackrel{\eqref{eq:char:DeltaH-Ais}}= 2K(df,dh) = 2D f^* K D h,
\]
which implies that this choice of $\Sigma$ satisfies the fluctuation-dissipation relation~\eqref{eq:fluctuation-dissipation}.

We now show that~\eqref{eq:dissipative-part-sde-mf} and~\eqref{eq:dissipative-part-sde-Rd} coincide. We write $a_{ij}(x)$ for the coordinates in $\R^d$ of the vector fields $A_i(x)$. For each $i$, by the standard translation between Stratonovich and It\^o interpretations (e.g.~\cite[Sec.~6.5.6]{Evans_2013}) the Stratonovich term
\[
	\sqrt{2} A_i \strat dW^i = \sqrt{2} \sum_{j=1}^d e_j \;a_{ij}\strat dW^i
\]
has an equivalent formation as It\^o term
\[
	\sqrt{2} A_i\, dW^i +  c \, dt, \qquad \text{where}\qquad
	c_j = \sum_{i=1}^d \sum_{k=1}^d \bra[\big]{\partial_{x_k} a_{ij}} a_{ik}.
\]
Calculating the $j^{\mathrm{th}}$ coordinate of $\div_\nu K$, we observe that
\begin{align*}
	(\div_\nu K)_j
	 & =   \frac1\nu \sum_{i=1}^d \sum_{k=1}^d \partial_{x_k} (\nu\, a_{ij}a_{ik})
	=
	\sum_{i=1}^d \sum_{k=1}^d \bra[\big]{\partial_{x_k} a_{ij}}a_{ik}
	+  \frac1\nu \sum_{i=1}^d \sum_{k=1}^d a_{ij}\bra[\big]{\partial_{x_k} (\nu \, a_{ik})} \\
	 & =  c_j +   \sum_{i=1}^d a_{ij} \div_\nu  A_i,
\end{align*}
and therefore
\begin{align*}
	\sum_{i=1}^d (\div_\nu  A_i) A_i\, dt + \sqrt{2}\sum_{i=1}^d A_i(X_t) \strat dW^i_t
	 & =  \div_\nu K -  c + \sqrt{2} \sum_{i=1}^d A_i dW^i +   c \\
	 & =   \div_\nu K  + \sqrt{2} \sum_{i=1}^dA_i dW^i.
\end{align*}
This coincides with~\eqref{eq:dissipative-part-sde-Rd} by our definition of $\Sigma$, showing that the two formulations coincide when the manifold $M$ is $\R^d$ and $\nu$ is the Lebesgue measure.

\subsubsection{Coordinate transformations in the Euclidean GENERIC SDE}
\label{sss:coordinate-transformations-euclidean}
The general-$\nu$ SDE~\eqref{eq:generic-sde-nu} on $\R^d$ is coordinate invariant.
In~\cite[Rem.~4.6]{MielkePeletierZimmer25} the rules for transformation of coordinates for this SDE were already given; we repeat them here for convenience.
If $X_t$ is a solution of~\eqref{eq:generic-sde-nu}   and $\phi\colon\R^d\to\R^d$ is a smooth bijection, then $Y_t :=\phi(X_t)$ is a solution of
\begin{equation}
	\label{eq:GSDE-coordinate-invariant-hat}
	d  Y_t = \bra*{\wh J(Y_t) D \wh E(Y_t) + \wh K(Y_t) D \wh S(Y_t)
		+   (\div_{\wh \nu}{\wh K})(Y_t)}\, d t
	+ \wh\Sigma(Y_t) \, d W_t.
\end{equation}
Here the components are defined as
\begin{alignat}{2}
	\wh E(\phi(x))   & :=   E(x),                         & \qquad \wh S(\phi(x))                     & :=  S(x),\notag                                             \\
	\wh J(\phi(x))   & :=  D \phi(x) J(x)  D  \phi(x)^*,  & \wh  K(\phi(x))                           & :=  D \phi(x)  K(x)  D  \phi(x)^*,  \label{eqdef:Jhat-Khat} \\
	\wh \nu(\phi(x)) & := \frac 1{\det D \phi(x)} \nu(x), & \quad\text{and}\quad \hat \Sigma(\phi(x)) & :=  D \phi(x) \Sigma(x).\notag
\end{alignat}
Expressions such as in~\eqref{eqdef:Jhat-Khat} should be read as
\[
	\wh J_{ij}(\phi(x)) := \sum_{k,\ell} \partial_{x_k}\phi_i(x) J_{k\ell}(x) \partial_{x_\ell} \phi_j(x).
\]
One can check that
\begin{enumerate}
	\item $\wh J$ and $\wh \nu$  again satisfy the unimodularity condition~\eqref{eq:ass:unimodularity-mf}, i.e.\ $\div_{\wh \nu} (\wh J(dh, \cdot)) = 0$;
	\item $\wh \nu$ is the push-forward of the volume form $\nu$  under $\phi$;
	\item The volume $\wh \nu(y) \ee^{\wh S(y)}$ is the push-forward of $\nu( x)  \ee^{  S(x)}$ under~$\phi$, and is stationary for~\eqref{eq:GSDE-coordinate-invariant-hat}.
\end{enumerate}

\subsection{The Fokker-Planck equation}

The generator~\eqref{eq:generator-mf} also gives rise to the Fokker-Planck equation for the law of the process $X$. If we write $\rho_t\nu$ for this law, where $\rho_t$ is a smooth (time-varying) scalar function on $M$, then the evolution of $\rho$ is given by the equation
\[
	\dot\rho = L^* \rho,
\]
where $L^*$ is an adjoint of $L$ in an appropriately chosen space. This adjoint takes a particularly simple form if we choose the $L^2$-space with weight $e^{S}\nu$ and recall \eqref{eq:def:DeltaH-betaS}:
\begin{lemma}[Adjoint of $L$]
	Writing
	\[
		Lf = \J(dE,df) + \K(dS,df) + \Delta_\calH f = \underbrace{\J(dE,df)}_{=:\, L_af} + \underbrace{\Delta_{\calH, S} f}_{=:\,L_s f}
	\]
	the adjoint $L^*$ in $L^2(M;e^{S}\nu)$ of $L$ has the characterization
	\begin{equation}
		\label{eq:l:char-L*}
		L^*f = -L_af + L_s f = -\J(dE,df) + \Delta_{\calH, S} f, \qquad \text{for }f\in C_c^\infty(M),
	\end{equation}
	i.e.\ in the space $L^2(M;e^{S}\nu)$ the separate operators $L_s$ and $L_a$ are symmetric and antisymmetric, respectively.
\end{lemma}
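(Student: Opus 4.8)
The plan is to exploit the additive splitting $L = L_a + L_s$ recorded in the statement and to prove separately that, with respect to the weighted pairing $(f,g)_S := \int_M fg\, e^{S}\nu$, the operator $L_s = \Delta_{\calH,S}$ is symmetric and the operator $L_a = \J(dE,\cdot)$ is antisymmetric. Once both facts are established, linearity of the adjoint yields $L^* = L_s^* + L_a^* = L_s - L_a$, which is precisely~\eqref{eq:l:char-L*}. Throughout I take $f,g\in C_c^\infty(M)$, so that every integrand below is compactly supported and no boundary terms arise, even though $E$ and $S$ themselves need not have compact support.

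For the symmetric part I would compute $(L_s f, g)_S = \int_M (\Delta_{\calH,S} f)\, g\, e^{S}\nu$ using the decomposition $\Delta_{\calH,S} f = \Delta_\calH f + \K(df,dS)$ from~\eqref{eq:def:DeltaH-betaS}. The key move is to apply the integration-by-parts identity~\eqref{eq:int-by-parts-K} with the test function $g\, e^{S}$ in place of $f$, converting $\int_M (\Delta_\calH f)\, g\, e^{S}\nu$ into $-\int_M \K\big(d(g e^{S}), df\big)\nu$. Expanding $d(g e^{S}) = e^{S}\,dg + g\,e^{S}\,dS$ by the product rule produces two contributions, and the one involving $\K(dS,df)$ cancels exactly against the extra drift term $\int_M \K(df,dS)\, g\, e^{S}\nu$ carried by $\Delta_{\calH,S}$. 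What survives is $-\int_M \K(df,dg)\, e^{S}\nu$, which is manifestly symmetric in $f$ and $g$ since $\K$ is a symmetric tensor; hence $L_s^* = L_s$. This calculation also exhibits why $e^{S}\nu$ is the natural weight: it is exactly the choice for which the gradient drift $\K(dS,\cdot)$ is absorbed into the symmetric Dirichlet form.

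For the antisymmetric part I would invoke Proposition~\ref{prop:unimodularity} with the fixed function $E$ and the compactly supported function $h = f g\, e^{S}$, giving $\int_M \J\big(dE, d(fg\, e^{S})\big)\nu = 0$. Expanding the total derivative by the product rule yields three terms,
\[
0 = \int_M \J(dE,df)\, g\, e^{S}\nu + \int_M \J(dE,dg)\, f\, e^{S}\nu + \int_M \J(dE,dS)\, f g\, e^{S}\nu,
\]
and the last integrand vanishes because $\J(dE,dS) = -\J(dS,dE) = 0$ by antisymmetry of $\J$ together with the non-interaction condition~\eqref{eqdef:NIC-mf}. The surviving identity is exactly $(L_a f, g)_S = -(f, L_a g)_S$, so $L_a^* = -L_a$.

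The argument is essentially a careful bookkeeping of product-rule terms, so I do not anticipate a deep obstacle; the only points needing attention are that one of the two functions be compactly supported so that~\eqref{eq:int-by-parts-K} and Proposition~\ref{prop:unimodularity} genuinely apply, and that the unimodularity assumption~\eqref{eq:ass:unimodularity-mf} underlying Proposition~\ref{prop:unimodularity} is what drives the cancellation in the antisymmetric computation. The conceptual content worth emphasizing is that both the symmetry of $L_s$ and the antisymmetry of $L_a$ rest on the same mechanism: passing to the weight $e^{S}\nu$ turns the $dS$-drift terms into contributions that either cancel or vanish by non-interaction.
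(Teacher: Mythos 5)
Your proposal is correct and follows essentially the same route as the paper: antisymmetry of $L_a$ via Proposition~\ref{prop:unimodularity} combined with the non-interaction condition~\eqref{eqdef:NIC-mf} (you apply unimodularity to $\J(dE,d(fg\,e^{S}))$, the paper to $\J(d(Ee^{S}),d(fh))$ --- the same mechanism with the weight placed in the other slot), and symmetry of $L_s$ by reducing to the Dirichlet form $-\int_M \K(df,dg)\,e^{S}\nu$, which the paper obtains by citing the computation in~\eqref{eq:char:DeltaH-Ais} with $\nu$ replaced by $e^{S}\nu$ while you spell it out via~\eqref{eq:int-by-parts-K}. No gaps.
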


\begin{proof}
	The antisymmetry of $L_a$ follows from remarking that for any functions $f$ and $h$,
	\begin{align*}
		\int_M h \J(dE,df)\, e^{S}\nu & = \underbrace{\int_M \J(d(Ee^{S}),d(fh))\,\nu}_{=0 \text{ by Proposition~\ref{prop:unimodularity}}}
		-\int_M E \,\J(d(e^{S}), d(fh))\, \nu
		- \int_M f \J(dE,dh)\, e^{S}\nu                                                                                                     \\
		                              & = -\int_M E \underbrace{\J(dS, d(fh))}_{=0 \text{ by \eqref{eqdef:NIC-mf}}}\,e^{S}\nu
		- \int_M f \J(dE,dh)\, e^{S}\nu.
	\end{align*}
	We have already shown the symmetry of $L_s$ in \eqref{eq:char:DeltaH-Ais}, where we only need to replace $\nu$ with~$e^{S}\nu$.
\end{proof}

This formulation gives an newfound relevance to the volume form $e^{S}\nu$, which does no longer appear as an informed but somewhat arbitrary modeling choice, but seems to underline an essential structure of the system.
Indeed, the laplacian that drives the diffusion in the Fokker-Plank equation, which is usually described by separate metric and diffusion components, is now a single geometric laplacian, symmetric with respect to this new deformed volume. This also highlight how the entropy is not just an arbitrary driving functional, but is in some sense deeply intertwined with the geometry of the diffusion.

\subsection{The Fokker-Planck equation is a deterministic GENERIC equation in its own right}

The Fokker-Planck equation $\dot \rho = L^*\rho$ for the law of a gGENERIC SDE is itself a GENERIC equation. This was observed in~\cite{GrmelaOttinger97} for the related \emph{GENERIC with fluctations}, and here we describe this structure in a heuristic manner for the geometric GENERIC SDE of this paper.

As in the previous section, we write the law of a gGENERIC SDE as $\rho_t\nu$, where $\rho_t$ is a time-varying element of $L^2(M;\nu)$. At the level of laws, the state space is therefore the infinite dimensional manifold $\bbM := L^2(M;\nu)$, whose elements we will denote $\rho$. Since we now are dealing with two manifolds, $M$ and $\bbM$, we continue to use `$d$' for differentials on $M$ but we write `$d_\bbM$' for differentials on $\bbM$.

Given the setup of the gGENERIC SDE at the beginning of this section, in terms of $(M,\K,\J,\nu,E,S)$,
we now define the four components $\bbE$, $\bbS$, $\bbJ$, and $\bbK$ of a deterministic GENERIC system:
\begin{subequations}
	\label{eqdef:gGENERIC-setup-FP-eq}
	\begin{align}
		\bbE(\rho)           & := \int_M E \, \rho\, \nu,               & \bbS(\rho)           & = \int_M (S - \log \rho)\,\rho \,\nu ,   \\
		\bbJ_\rho(\bbA,\bbB) & = \int_M \J(d \bbA, d\bbB)\, \rho\, \nu, & \bbK_\rho(\bbA,\bbB) & = \int_M \K(d \bbA, d\bbB)\, \rho\, \nu.
	\end{align}
\end{subequations}
Here $\bbA$ and $\bbB$ are covector fields on $\bbM$, which we identify with scalar functions on $M$; this can be done for the case that we are interested in, which is the case when $\bbA$ and $\bbB$ are differentials $d_\bbM \bbF$ and $d_\bbM \bbG$ of some functions $\bbF$ and $\bbG$ on $\bbM$. In this case we consider $\bbA = d_\bbM \bbF(\rho)$ to have an alternative (and more explicit) interpretation as
\[
	\int_M \bbA(x) \tilde \rho(x) \nu(dx) := \lim_{\e\to 0} \frac1\e\bra[\big]{\bbF(\rho+\e\tilde\rho)-\bbF(\rho)}
	\qquad\text{for all }\tilde \rho.
\]
For instance, in the derivation below we will choose $\bbG(\rho) := \int_M \varphi \, \rho\, \nu$ for some $\varphi\in C^\infty(M)$, with which $d_{\bbM} \bbG(\rho) = \varphi$ for any $\rho$. Note that by this identification we have
\[
	d_\bbM \bbE (\rho) =E
	\qquad \text{and}\qquad
	d_\bbM \bbS(\rho) = S - - \log\rho - 1.
\]

We now show that the deterministic GENERIC system determined by $(\bbM, \bbK,\bbJ,\bbE,\bbS)$ coincides with $\dot \rho = L^*\rho$, where we consider $L^*$ to be the adjoint in $L^2(M;\nu)$, by our choice of $\bbM$. Note that this is different from the adjoint in $L^2(M;e^{S}\nu)$ that we considered in the previous section.
Note that for any $\rho,\varphi\in C^\infty(M)$ we have
\begin{align*}
	\int_M \rho (\Delta_{\calH} \varphi )\,\nu
	 & \stackrel{\eqref{eq:sub-Laplacian}}=  \int_M \rho \div_\nu (\nabla_\calH \varphi)\, \nu
	\stackrel{\eqref{eq:int-by-parts-K}}= -\int_M \K(d\rho,d\varphi)\, \nu
	= -\int_M \K(d(\log\rho),d\varphi)\, \rho\, \nu,
\end{align*}
and therefore
\begin{align*}
	\frac d{dt} \bbG(\rho_t)
	 & = \int_M \varphi L^* \rho_t \, \nu
	= \int_M \rho_t L\varphi \, \nu                                                                                               \\
	 & = \int_M \rho_t \J(dE,d\varphi)\, \nu + \int_M \rho_t \K(dS,d\varphi)\, \nu + \int_M \rho_t (\Delta_{\calH} \varphi )\,\nu \\
	 & = \bbJ_{\rho_t} (d_\bbM \bbE, d_\bbM \bbG)
	+ \int_M \K(d(S-\log \rho_t ),d \varphi)\, \rho_t\, \nu                                                                       \\
	 & = \bbJ_{\rho_t} (d_\bbM \bbE, d_\bbM \bbG)
	+ \bbK_{\rho_t}(d_\bbM \bbS, d_\bbM \bbG).
\end{align*}
It follows that the curve $t\mapsto \rho_t$ in $\bbM$ is a solution of the deterministic GENERIC equation in the form~\eqref{eq:gGENERIC}, with components $(\bbM, \bbK,\bbJ,\bbE,\bbS)$.

\begin{remark}[Role of $\nu$]
	In the setup~\eqref{eqdef:gGENERIC-setup-FP-eq} the choice of the volume $\nu$ appears to be uninportant, since $\nu$  appears every time in conjuction with $\rho$. In fact, $\nu$ has an important role in the expression
	\[
		\bbS(\rho) =\int_M (S - \log \rho)\,\rho \,\nu,
	\]
	which can be interpreted as the (negative) relative entropy of $\rho\nu$ with respect to the measure~$e^{S}\nu$, or rewritten in more conventional form,
	\[
		\bbS(\rho) = -\calH(\rho\nu| e^{S}\nu)
		= -\int_M \bra[\Big]{\log \frac {d\rho\nu}{de^{S}\nu}}\,
		\rho\,\nu.
	\]
	Here one observes again the role of $\nu$ as reference measure.
\end{remark}

\section{Limit of vanishing noise}
\label{s:vanishing-noise}

One design requirement of the geometric GENERIC SDE of this paper is that it reduces consistently to the deterministic GENERIC evolution~\eqref{eq:gGENERIC} in the limit of `small noise', or equivalently of `low temperature'. Since we have not yet discussed `temperature' at all, we need to understand what this `low-temperature limit' is.

This limit refers to the situation where the system is effectively operating at constant temperature, for instance because it is in contact with a `heat bath' that enforces a spatially constant and fixed temperature $T$. By tuning this heat-bath temperature $T$ one can influence the temperature of the system, and one can also make that temperature small.

If we assume such a heat-bath-enforced, isothermal situation, then the temperature $T$ is a fixed parameter in the evolution, and the question arises how this temperature $T$ appears in the stochastic gGENERIC equations. Here we focus on a common situation in which the entropy~$S$ depends linearly on the parameter $1/T$:
\begin{equation}
	\label{ass:S-scales-like-1/T}
	S(x) = \frac1T \wt S(x) ,
\end{equation}
where $\wt S$ is independent of $T$. This can be proved  for instance in the case that the heat bath is a large collection of independent Hamiltonian oscillators; see e.g.~\cite[p.~69]{Chorin94} or~\cite[Sec.~4.6, in particular equation (4.27)]{MielkePeletierZimmer25}. As an example, consider the Langevin equation~\cite{Nelson67,Pavliotis14}, which models the movement of a tagged particle in a literal `bath' of other particles. The surrounding particles are not modelled explicitly, but their influence on the tagged particle is represented by drag  and thermal agitation forces; such a `bath' can be considered a `heat bath' because there are many more surrounding particles than tagged particles, and in approximation the bath therefore preserves its energy per particle and its temperature.

\medskip

In this situation it is common to assume that the Onsager operator $\K$ depends linearly on~$T$:
\begin{equation}
	\label{ass:K-scales-like-T}
	\K(\cdot,\cdot) = T\, \wt \K(\cdot,\cdot),
\end{equation}
where again $\wt \K$ is independent of $T$. This choice connects to the fluctuation-dissipation relation~\eqref{eq:fluctuation-dissipation} that we imposed above. Derivations of this relation in the particle-in-a-bath context, going back to Einstein~\cite{Einstein05} (see e.g.~\cite[App.~A.1.10]{Sekimoto10} for a modern derivation) show this relation in the modified form
\[
	\Sigma \Sigma^*(x) = 2  T \wt K(x),
\]
where $\wt K$ is the drag coefficient of a particle in the fluid, i.e.\ the ratio of force to velocity on a particle being dragged through the environment, and it is independent of $T$.

\medskip
Putting assumptions~\eqref{ass:S-scales-like-1/T} and~\eqref{ass:K-scales-like-T} together, in view of~\eqref{eq:sub-Laplacian} and~\eqref{eq:int-by-parts-K} we find that the generator $L$ can be written as
\begin{equation}
	\label{eq:gGENERIC-L-with-T}
	Lf  = \J(dE,df) + \wt\K(d\wt S,df) + T\, \wt \Delta_\calH f \qquad\text{for any }f\in C_c^\infty(M),
\end{equation}
or equivalently the SDE~\eqref{eqdef:sde-mf-with-JK} can be written as
\begin{equation}
	\label{eq:gGENERIC-SDE-with-T}
	d X_t= \bra[\Big]{\J(dE,\cdot) + \wt\K(d\wt S,\cdot) +  T \sum_{i=1}^d (\div_\nu  \wt A_i) \wt A_i}\, dt + \sqrt{2T}\sum_{i=1}^d \wt A_i(X_t) \strat dW^i_t,
\end{equation}
where $\wt\Delta_{\calH}$ and $\wt A_i$ should be understood as the result of applying Section~\ref{ss:sub-Riemannian-BM} to $\wt\K$ instead of~$\K$.
In particular, observe that in the limit $T\to0$ the two expressions~\eqref{eq:gGENERIC-L-with-T} and~\eqref{eq:gGENERIC-SDE-with-T} revert to their equivalents of the deterministic GENERIC equation, but now in terms of $\wt \K$ and $\wt S$.

\begin{remark}
	Note however that by the same assumptions~\eqref{ass:S-scales-like-1/T} and~\eqref{ass:K-scales-like-T}, in the \emph{deterministic} GENERIC equation we can replace the combination $\wt\K(d\wt S,\cdot)$ by $\K(dS,\cdot)$ without changing the evolution; it is only in the stochastic version that one notices the scaling in $T$ of $S$ and $\K$.

	\medskip
	Also the role of $\nu$ diminishes when $T\to0$: this can be recognized in the fact that $\nu$ only appears in the last drift term in~\eqref{eq:gGENERIC-SDE-with-T} (the third term in parentheses), and is prefixed by $T$. This also implies that in the Euclidean case the difference between the coordinate-invariant SDE~\eqref{eqdef:sde-mf-with-JK} and the non-coordinate-invariant SDE~\eqref{eq:generic-sde} vanishes as $T\to0$.
\end{remark}

\section{Conclusion and discussion}\label{sec:discussion}

In this paper we propose a geometric formulation for GENERIC stochastic differential equations on manifolds. This SDE has been constructed to satisfy a number of conditions:
\begin{itemize}
	\item It has a clear geometric structure, in which the geometric components of the manifold ($\K$, $\J$, and $\nu$) are separated as much as possible from the functions $E$ and $S$.
	      In particular, it is fully coordinate-invariant.
	\item The measure $e^{S}\nu$ is invariant under the flow.
	\item It reduces to the usual GENERIC equation in the limit of zero noise.
\end{itemize}
In addition, the particular structure has been inspired by the coarse-graining derivation of~\cite{MielkePeletierZimmer25}.
In the rest of this section we discuss a number of further aspects.

\subsection{The volume $\nu$ and the unimodularity condition}
\label{ss:unimodularity}

In this paper we introduce an explicit volume form $\nu$ on $M$ that does not appear in earlier work. We claim that this form is (a) necessary to make the setup geometrically consistent, (b) implicitly present in earlier work, and (c) physically meaningful. We now comment on these aspects.

As remarked in Section~\ref{sec:generic-sde}, a design requirement is that the measure ``$e^{S(x)}dx$'' is invariant under the SDE flow. In fact, this expression is a statement about \emph{two} measures, not one: the Lebesgue measure ``$dx$'' appears as a reference measure, and ``$e^{S(x)}dx$'' is a modification or a `tilting'~\cite{PeletierSchlichting23} of this measure.

This raises the question, `what is special about $dx$?' Here
we claim that the answer lies in the fact that canonical Hamiltonian systems in $\R^{2n}$ preserve the Lebesgue measure. To illustrate this, consider canonical Hamiltonian variables $(q,p)$ in $\R^{2n}$; then the symplectic operator $J$ has the form

\[
	J = \begin{pmatrix}
		0 & I \\ -I & 0
	\end{pmatrix}.
\]
Any Hamiltonian evolution with this symplectic operator conserves the Lebesgue measure.
After transformation to non-canonical coordinates, the Lebesgue measure is transformed into a new measure (let us call it $\nu$) and the symplectic structure $J$ is transformed to, say, $\wh J$. The conservation of the measure $\nu$ under any Hamiltonian vector field for the transformed symplectic structure should then hold for any Hamiltonian vector field $\wh J(d\wh h, \cdot)$. Since a measure $\rho$ is stationary under a vector field $X$ iff $\div _\rho X = 0$, this leads us to the unimodularity condition, $\div_\nu J(dh,\cdot)=0$ for any $h$.

In conclusion, we posit that any GENERIC SDE involves not one but \emph{two} distinguished measures:
\begin{itemize}
	\item A measure $\nu$ corresponding to an invariant volume for the unimodular Poisson structure~$J$;
	\item A measure $e^{S(x)}\nu(dx)$ which is stationary for the full SDE evolution.
\end{itemize}
When the Poisson structure is inherited from canonical variables, the measure $\nu$ typically is the Lebesgue measure, and this explains why the necessity of making $\nu$ explicit has not been recognized before.

\begin{remark}[Unimodularity is automatic in symplectic structures, not in Poisson structures]\label{rmk:univalence-geometric}
	The conservation of the Lebesgue measure above by the canonical symplectic form in $\R^{2n}$ has a direct extension to the more general case of classical (i.e.\ regular) Hamiltonian systems on the phase-space manifold $T^*M$, where it is the volume form induced by the symplectic form that is preserved by the flow.

	However, for arbitrary Poisson structure and arbitrary choices of a volume form this is not necessarily the case, requiring the introduction of unimodularity: a Poisson manifold is unimodular if there exists a volume form invariant under all its Hamiltonian flows, that is, if there exists $\nu$ such that $\calL_{X_h}\nu = 0$ for all $h\in C^\infty (M)$. For more information we refer the reader to a book on Poisson geometry, e.g.\ the recent monography~\cite{Crainic2021}.

	A careful look at the proof of Proposition~\ref{prop:unimodularity} shows that the geometry imposes constraints on what those volumes can be. Indeed, such an invariant volume should be unique up to multiplication by a nowhere-vanishing Casimir function: let $\mu$ and $\mu'$ be two invariant volume forms, and let~$h$ be a nowhere-vanishing function such that $\mu' = h\mu$. Then, following similar ideas as in the aforementioned proof we get
	\[
		0 = \mathcal{L}_{X_f}\mu' = \mathcal{L}_{X_f}(h\mu)
		= (X_f h)\,\mu + h\,\mathcal{L}_{X_f}\mu
		= (X_f h)\,\mu,
	\]
	where $\mathcal{L}_{X_f}\mu=0$ by invariance of the volume (i.e., unimodularity). Hence $X_f h=0$ for every $f$, that is, $h$ Poisson-commutes with every function and thus $g$ is a Casimir.
	This has interesting implications. For instance, on a connected nondegenerate Poisson manifold the Casimirs are just constants~\cite[Example 9.31]{Crainic2021}, and in such case the invariant volume is unique up to a constant factor.
	The study of these problems and characterizations is part of Poisson cohomology, and more information can be found in the relevant literature, e.g.~\cite[Chapter 9ff]{Crainic2021}.
\end{remark}

\subsection{Other approaches to geometrization of GENERIC}
\label{ss:Esen-et-al}

Esen, Grmela, and Pavelka~\cite{EsenGrmelaPavelka22I,EsenGrmelaPavelka22II} propose a symplectic and a contact-geometric formulation of the GENERIC equation, by embedding the state space of the GENERIC equation~\eqref{eq:generic} into a space of twice the dimension (`doubling the variables').

\medskip

Our approach is different in three main aspects:
\begin{itemize}
	\item we aim at a geometrization that includes the stochastic evolution from the beginning;
	\item we work directly on the state space of the system, without introducing additional state-space dimensions;
	\item we focus on a description that keeps the Poisson and metric structures on the same footing, without privileging one over the other.
\end{itemize}
The latter two points are crucial. Given any vector field, one can always find a symplectic or contact structure that reproduces it by extending the state space with additional dimensions. However, this construction is not unique, and it does not clarify the role of the different structures at play. In our opinion, a geometric formulation should provide such clarification, and this is part of what we aim to achieve in this paper, under the guiding principles explained at the beginning of this section.

\subsection{Relation with coarse-graining}

In the early introductions of GENERIC in e.g.~\cite{Ottinger98} and~\cite[Ch.~6]{Oettinger05}, \"Ottinger provides several motivations for the GENERIC structure and its restrictions that are based on coarse-graining. From this point of view, the GENERIC equation arises as the coarse-grained, upscaled version of a more microscopic evolution equation, which could be purely Hamiltonian or yet another GENERIC equation. In this `coarse-graining process' information is lost: the microscopic state space is `larger' in some sense than the upscaled state space, and this loss of information, or loss of resolution, generates additional components in the entropy $S$.

In~\cite{MielkePeletierZimmer25} the authors follow the same path, with the aim of motivating the GENERIC structure through coarse-graining. Their coarse-graining procedure is slightly different, and their aim is to perform the coarse-graining in a mathematically rigorous way. Our choice in this paper to take the unimodularity condition as foundational is inspired by the findings of~\cite{MielkePeletierZimmer25}.

However, our postulate of~\eqref{eq:ass:unimodularity-mf} still is only a postulate, and we still want to motivate this, rigorously, by a coarse-graining procedure. In fact, despite the efforts in~\cite{Oettinger05,MielkePeletierZimmer25}, the understanding of GENERIC through coarse-graining still is far from complete. This is a topic that we will be pursuing in the future.

\subsection{Freedom of choice in mathematical and modelling senses}
\label{ss:math-vs-modelling}

The systems that we study in this paper are models of real-world systems, and consequently all the components of a gGENERIC SDE---the whole set $(M,\K,\J,\nu,E,S)$---can and should be interpretable in a modelling sense. For some of these components, such as the state space $M$ and the energy $E$, this interpretation is straightforward, while for others, such as the entropy $S$ and the Onsager operator $\K$, this may be more opaque. In the case of the volume form $\nu$ we have discussed this aspect in Section~\ref{ss:unimodularity}.

A corollary of this point of view is that the \emph{mathematical} structures that we construct in this paper should use exactly these modelling components, and nothing else. This is indeed the case, and since this fact is slightly hidden, it is worth making it explicit.

In our construction we define the Laplacian operator and the diffusion process by choosing a volume (see Section~\ref{ss:sub-Riemannian-BM}). However, there is an alternative way to define the Laplacian on a sub-Riemannian manifold, one which does not require choosing a volume.
Instead one chooses a partial connection on the manifold, and then defines the Laplacian as the trace of the Hessian (see e.g.~\cite[Section 4.2]{Thalmaier16}). This can be convenient as it allows one to define horizontal martingales by lifting them from Euclidean martingales via their stochastic developments, generalizing the similar procedure from the Riemannian case (see also the explanation in~\cite{GrongThalmaier16}).
So one could question why our approach in this paper should be the preferred one.

To answer this, we need to delve a bit more into the details of this other construction. A choice of connection, in this case, is effectively a choice of the vertical bundle $\calV$, such that $TM = \calH \oplus \calV$. This can be done for instance by choosing a Riemannian metric $\mathbf{g}$ on $M$ that restricts as $\mathbf{g}|_\calH = g_\calH$, and then taking $\calV$ as the orthogonal complement of $\calH$. If $\nabla$ is the Levi-Civita connection of $\mathbf{g}$, one can check that for any two horizontal vectors $X,Y \in \calH$, $\pi_\calH \nabla_X Y$ does not depend on $\mathbf{g}|_\calV$, and thus one can appropriately nest projections in the definition of the Hessian to obtain a well-defined Laplacian operator.
While this leads to the Laplace-Beltrami operator in the Riemannian case, in the sub-Riemannian case this can lead to different operators depending on the choice of $\mathbf{g}$, and moreover there are different choices of $\mathbf{g}$ that lead to the same operator.
This would introduce a non-trivial choice in the construction, which would not be interpretable in a modelling sense and which leaves a freedom (due to the aforementioned non-uniqueness) that would be hard to justify in practice.

So, while this alternative approach can be useful in spectral geometry approaches to sub-Riemannian diffusion, in our context the choice of a volume form is more natural and coincides precisely with the modelling point of view of the equations.

\section*{Acknowledgements}
We are grateful to Miroslav Grmela and Alessandro Bravetti for their useful observations and comments on the first draft of this manuscript.

\bibliographystyle{alphainitials}
\bibliography{bib-merged}

\end{document}